\newtheorem{theorem}{Theorem}[section]
\newtheorem{lemma}[theorem]{Lemma}
\newtheorem{proposition}[theorem]{Proposition}
\theoremstyle{definition}
\newtheorem{definition}[theorem]{Definition}
\newtheorem{remark}[theorem]{Remark}
\theoremstyle{approach}
\newcommand{\NN}{\mathbb N}
\newcommand{\CC}{\mathbb C}
\numberwithin{equation}{section}
\begin{document}
\setcounter{page}{1}

\title[The {\text BSE} Property for commutative Fr\'echet Algebras]{The Bochner-Schoenberg-Eberlein Property for commutative Fr\'echet Algebras}
\author[M. Amiri and  A. Rejali]{M. Amiri and A. Rejali}
\subjclass[2010]{46H05, 46H25, 46J05, 46J20}
\keywords{{\text BSE}-algebra, {\text BSE}-function, commutative Fr\'echet algebra, multiplier algebra.}

\begin{abstract}

A class of commutative Banach algebras which satisfy a Bochner-
Schoenberg-Eberlein-type inequality was introduced by Takahasi 
and Hatori. We generalize this property for the commutative Fr\'echet algebra
 $(\mathcal A, p_\ell)$. Furthermore, some of the main results in the class of Banach algebras, 
will be verified and generalized for the Fr\'echet case. 

\end{abstract}
 
\maketitle \setcounter{section}{-1}

\section{Introduction}

Let $\mathcal A$ be a commutative semisimple Banach algebra with
 Gelfand spectrum $\Delta(\mathcal A)$. In addition, suppose that 
 $C_b(\Delta(\mathcal A))$ is the space consisting of all continuous and bounded complex-valued  
functions on $\Delta(\mathcal A)$. It is known that
$C_b(\Delta(\mathcal A))$ is a Banach algebra under supremum norm,
 defined as
$$\|f\|_\infty=\sup\big{\{}|f(\varphi)|:\;\varphi\in\Delta(\mathcal
A)\big{\}}\;\;\;\;\big{(}f\in C_b(\Delta(\mathcal A))\big{)},$$
and the pointwise product. Takahasi and Hatori \cite{15} studied
commutative Banach algebras, satisfying a Bochner-Schoenberg-Eberlein-type theorem.
Following \cite{15},
$\sigma\in C_b(\Delta(\mathcal A))$ is called a {\text BSE}-function, if
there exists a positive real number $\beta$ such that for every
finite number of complex-numbers $c_1,\cdots,c_n$ and the same number of
$\varphi_1,\cdots,\varphi_n$ in $\Delta(\mathcal A)$ the
inequality
%\begin{center}
$$\vert\sum_{i=1}^{n}c_{i}\sigma (\varphi_{i})\vert\leq\beta
\Vert\sum_{i=1}^{n}c_{i}\varphi_{i}\Vert_{\mathcal A^{\ast}}$$
%\end{center}
holds. The {\text BSE}-norm of $\sigma$, denoted by $\|\sigma\|_{\text{BSE}}$, is defined as the infimum of all such $\beta$. The set of all {\text BSE}-functions, denoted by
$C_{\text{BSE}}(\Delta(\mathcal A))$, under the norm $\|.\|_{\text{BSE}}$ is a
complete semisimple subalgebra of $C_b(\Delta (\mathcal A))$. Let
$M(\mathcal A)$ be the multiplier algebra of $\mathcal A$. Then for each
 $T\in M(\mathcal A)$,
there exists a unique bounded continuous function $\widehat{T}$ on
$\Delta(\mathcal A)$ where
$\widehat{T(a)}(\varphi)=\widehat{T}(\varphi)\varphi(a)$ for all
$a\in\mathcal A$ and $\varphi\in\Delta(\mathcal A)$; see
\cite{10}. Write $
\widehat{M(\mathcal A)}=\{\widehat{T}:\;T\in M(\mathcal A)\}.$ The Banach algebra 
$\mathcal A$ is called a {\text BSE}-algebra if
$\mathcal A$ satisfies the condition $\widehat{M(\mathcal
A)}=C_{\text{BSE}}(\Delta(\mathcal A))$. 
Takahasi and Hatori provided some important
necessary and sufficient conditions for a Banach algebra to be a
{\text BSE}-algebra; see \cite{15}. The abbreviation {\text BSE} stands for
 Bochner-Schoenberg-Eberlein and refers to a theorem, proved by Bochner and
Schoenberg for the additive group of real numbers \cite{1, 13}, and
then in general, by Eberlein \cite{2} for any locally compact abelian
group $G$. In this terminology, the group algebra $L^1(G)$ is
a {\text BSE}-algebra; see \cite{12}. Moreover, Kaniuth 
and \"Ulger \cite{9} have studied more general properties of {\text BSE}-algebras. 
Abtahi et al. in \cite{2A}
studied the $\mathcal A$-valued Lipschitz functions algebra
 $Lip_{\alpha}(X,\mathcal A)$ and showed that it is a {\text BSE}-algebra
 if only if $\mathcal A$ is so. 
 Abtahi and Kamali \cite{1A} also 
 studied the {\text BSE}
 property of the Banach algebra of $\mathcal A$-valued functions 
 $\ell_{p}(X,\mathcal A)$, for $1\leq p <\infty$. They proved that
 $\ell_{p}(X,\mathcal A)$ is a {\text BSE}-algebra if and only if $X$ is
 finite and $\mathcal A$ is a {\text BSE}-algebra. Several other authors also
 have investigated BSE-algebras; see for example \cite{4, 5, 6, 8, 16}.  

Following \cite{10}, a Fr\'echet space is a completely metrizable locally convex space where its 
topology is generated by translating invariant metric space. Moreover, following \cite{3},
a complete topological algebra $\mathcal{A}$ is a Fr\'echet algebra if its topology is produced by a countable family of
increasing submultiplicative seminorms $(p_\ell)_{\ell\in\mathbb{N}}$.
The class of 
Fr\'echet algebras which is an important class of locally convex algebras has
 been widely studied by many authors. For a full understanding of 
Fr\'echet algebras, one may refer to \cite{3, 11}. Note that 
every Banach algebra (space) is a Fr\'echet algebra (space), but in general 
the converse is not true. Some differences between Fr\'echet and Banach
 algebras surveyed in \cite{3A}. In this paper, we
 generalize the {\text BSE} property for the commutative semisimple
Fr\'echet algebra $(\mathcal A,p_\ell)_{\ell\in\NN}$. We introduce the space
$C_{\text{BSE}}(\Delta({\mathcal A}))$ with a family of continuous
seminorms $(r_\ell)_{\ell\in\NN}$ and show that $\big{(}C_{\text{BSE}}(\Delta({\mathcal
A})), r_\ell\big{)}_{\ell\in\NN}$ is a commutative semisimple Fr\'echet algebra. Moreover, 
we generalize some main results from Banach algebras to Fr\'echet
case. We define the notion of bounded 
$\Delta$-weak approximate identity for Fr\'echet algebras,
according to its definition for Banach algebras. Analogous to the Banach case,
 we prove that
$\widehat{M(\mathcal A)}\subseteq C_{\text{BSE}}(\Delta(\mathcal A))$ if
and only if $\mathcal A$ has a bounded $\Delta$-weak approximate identity.
 Let $(\mathcal A,p_\ell)$ be a commutative semisimple Fr\'echet algebra and let $\mathcal{A}^*$ denoted the topological dual of $\mathcal A$.
 We recall from \cite{11} that the strong topology on $\mathcal{A}^*$ is generated by seminorms $(P_M)$
 where  $M$ is a bounded set in
 $\mathcal A$.
Put
$$\beta_{M}({\mathcal A}):=\sup\big{\lbrace}\vert\sum_{i=1}^{n}c_{i}\vert:
\, P_{M}(\sum_{i=1}^{n} c_{i} \varphi_i)\leq 1 ,\;c_1
\cdots , c_n \in \mathbb{C},\, \varphi_1, \cdots,\varphi_n \in
\Delta({\mathcal A})\big{\rbrace}.$$ 
In the present paper, 
we show that
$C_{\text{BSE}}(\Delta({\mathcal A}))$ is unital if and only if
$\beta_{M}({\mathcal A})<\infty$, for some bounded $M$.
 
\section{Preliminaries}

In this section, we provide some basic definitions and frameworks, which will be required
 throughout the paper. 
 
 Let $E$ be a Hausdorff locally
convex space. Following \cite{11},
 $E$ has a fundamental system of seminorms
$(p_{\alpha})_{\alpha\in \Lambda}$, i.e. 
 a family of the
continuous seminorms satisfying the following properties:
\begin{enumerate}
\item[(i)] for every nonzero $x\in E$, there exists an
${\alpha}\in \Lambda$ with $p_{\alpha}(x)>0$; 
\item[(ii)] for all
${\alpha,\beta}\in \Lambda$, there exist ${\gamma}\in \Lambda$ and $K>0$ such
that $$\max\big{(}p_{\alpha}(x),p_{\beta}(x)\big{)}\leq K
p_{\gamma}(x)\;\;\;\;\;\;\;\;(x\in E).$$
\end{enumerate}
Throughout the paper, all locally convex spaces are assumed to be Hausdorff. 
Fr\'echet spaces are special locally convex spaces which we recalled in the previous section.
We also recalled the notion of Fr\'echet algebra.

Let $(\mathcal A,p_\ell)$ be a Fr\'echet algebra. 
A locally convex (resp. Fr\'echet) $\mathcal A$-bimodule is a locally convex (resp. Fr\'echet) space
$X$ together with the structure of an
$\mathcal A$-bimodule such that the corresponding mappings are
separately continuous.
Note that ${\mathcal A}^*$ is a locally convex
$\mathcal A$-bimodule with the module actions given by
$$
<a.f,b>=<f,ba>\;\;\;\text{and}\;\;\;<f.a,b>=<f,ab>\;\;\;\;\;\;\;\;\;\;\;(a,b\in\mathcal A,\;f\in{\mathcal A}^*),
$$
and the strong topology on bounded subsets of $\mathcal A$.
We know that
the net $(f_\alpha)_\alpha$ in ${\mathcal A}^*$ is convergent to
$f\in {\mathcal A}^*$ with respect to the strong topology,
if for every bounded subset $M$ of ${\mathcal A}$,
$$
\sup_{a\in M}|\langle f_\alpha-f,a\rangle|\longrightarrow_\alpha
0.
$$

It should be noted that, by \cite[page 79]{3-1}, if $\mathcal A$ is a
Fr\'echet algebra, then ${\mathcal A}^{**}$ is also a
Fr\'echet algebra with respect to the first Arens product and the strong topology on bounded subsets of ${\mathcal A}^*$.
Moreover, ${\mathcal A}^{**}$ is a Fr\'echet $\mathcal A$-bimodule via the 
module actions
$$<a.F,f>=<F,f.a>\;\;\;\text{and}\;\;\;<F.a,f>=<F,a.f>,$$ 
for all $a\in\mathcal
A$, $f\in {\mathcal A}^*$, and $F\in {\mathcal A}^{**}$.

Let us to recall that for a Fr\'echet algebra $\mathcal A$, the first Arens product on ${\mathcal
A}^{**}$ is defined by
$$
<F\square G , f>=<F , G.f>\;\;\;\;\;\;\;\;\;\;\;\;\;\;\;(f\in {\mathcal
A}^{*}),
$$
for all $F,G\in{\mathcal
A}^{**}$ where
$$
<G.f , a>=<G , f.a>\;\;\;\;\;\;\;\;\;\;\;\;\;\;(a\in\mathcal A).
$$

We now present some useful information about the second dual of the Fr\'echet algebra
$(\mathcal A,p_\ell)$. For each $\ell\in\Bbb N$, put $M_\ell:=\lbrace a\in
\mathcal A:\, p_\ell(a) < 1 \rbrace$.
Following \cite{11}, $(M_\ell)_{\ell\in\Bbb N}$ is a decreasing fundamental system of absolutely convex zero neighborhoods in $\mathcal A$. Consequently, $(M_{\ell}^{\circ})_{\ell\in\Bbb N}$ is an increasing
fundamental system of bounded sets in $\mathcal A^{\ast}$ where
 $M_{\ell}^{\circ}$ is the
polar of $M_{\ell}$, defined as
$$
M_{\ell}^{\circ}=\{f\in {\mathcal A}^{*}:\;|f(a)|\leq 1\;\text{for all}\;
a\in M_{\ell}\};
$$
see \cite[Lemma 25.5]{11} for details.
Consider the seminorms $(P_{M_{\ell}})_{\ell\in\Bbb N}$ and $(P_{M_{\ell}^{\circ}})_{\ell\in\Bbb N}$ respectively on $\mathcal A^{\ast}$ and ${\mathcal A}^{\ast \ast}$, where 
$$P_{M_{\ell}}(f):=\sup \lbrace \vert f(a)\vert:\,a \in M_{\ell}\rbrace\;\;\;\;\;\;\;\;\;\;\;\;(f\in\mathcal A^{\ast}),$$
and 
$$P_{M_{\ell}^{\circ}}(F):=\sup \lbrace \vert F(f)\vert:\,f \in M_{\ell}^{\circ}\rbrace\;\;\;\;\;\;\;\;\;\;\;\;(F\in{\mathcal A}^{\ast \ast}).$$
Now for each $\ell\in \mathbb{N}$, the following inequalities are immediate.
For all $a\in \mathcal A$, $f\in {\mathcal
A}^{\ast}$, and $F,G\in {\mathcal A}^{\ast \ast}$
we have
\begin{equation}\label{e1}
\vert f(a) \vert \leq  P_{M_\ell}(f)\; p_\ell(a),
\end{equation}
\begin{equation}\label{e2}
\vert G(f)\vert \leq
P_{M_{\ell}^{\circ}}(G)\; P_{M_\ell}(f), 
\end{equation}
\begin{equation}\label{e3}
P_{M_\ell}(f\cdot a)\leq P_{M_\ell}(f)\; p_l(a),
\end{equation}
\begin{equation}\label{e4}
P_{M_\ell}(G\cdot f)\leq P_{M_{\ell}^{0}}(G)\; P_{M_\ell}(f),
\end{equation}
and also
\begin{equation}\label{e5}
P_{M_{\ell}^{\circ}}(F\square G)\leq P_{M_{\ell}^{\circ}}(F)\; P_{M_{\ell}^{\circ}}(G).
\end{equation}
For convenience, we set $r_\ell(F):=P_{M_{\ell}^{\circ}}(F)$ for each $\ell\in\mathbb{N}$ and $F\in\mathcal{A}^{**}$.
By applying the last inequality, 
$(r_\ell)_{\ell\in\Bbb N}$ is an increasing sequence of submultiplicative 
seminorms on ${\mathcal A}^{\ast\ast}$.
Following \cite{3-1,11}, if
$({\mathcal A},p_\ell)$ is a Fr\'echet algebra, then $({\mathcal A}^{\ast \ast},r_\ell)=({\mathcal
A}^{\ast\ast},P_{N})$
where $N \subseteq {\mathcal A}^{\ast}$ runs through the bounded
subsets of ${\mathcal A}^{\ast}$. 
In particular, $({\mathcal A}^{\ast \ast},r_\ell)$ is complete.
Now, consider the canonical embedding $\iota:\mathcal A\rightarrow {\mathcal A}^{\ast \ast}$, 
$a\mapsto \widehat{a}$
where $\widehat{a}(f)=f(a)$ for each $f\in {\mathcal A}^{\ast}$. 
Following \cite{11}, $\iota$ is continuous.
Moreover,
$r_\ell(\widehat{a})=p_\ell(a)$ for 
each $a\in\mathcal A$ and $\ell\in \mathbb{N}$.
Indeed, by using
the general framework of the Hahn-Banach theorem
 \cite[Proposition 22.12]{11},
there exists
$f_0\in M_{\ell}^{\circ}$ such that $\vert f_0(a)\vert=p_\ell(a)$. Thus,
\begin{center}
$p_\ell(a)\leq\sup\lbrace\vert f(a)\vert:\,f \in M_{\ell}^{\circ} \rbrace
=r_\ell(\widehat{a}).$
\end{center}
On the other hand, for each $f\in M_{\ell}^{\circ}$ we have 
$$\vert f(a)\vert\leq
P_{M_{\ell}}(f)\;p_{\ell}(a)\leq p_{\ell}(a).$$
 For this reason, 
%\begin{center}
 $r_\ell(\widehat{a})=\underset{f\in M_{\ell}^{\circ}}{\sup}\lbrace\vert
f(a)\vert\rbrace\leq p_\ell(a)$ 
%\end{center}
and so $r_\ell(\widehat{a})=p_\ell(a)$.
 
\section{{\text BSE}-functions for Fr\'echet algebras}
 
Let $(\mathcal A,p_\ell)$ be a commutative Fr\'echet algebra.
Recall from the literature that $\mathcal A$ is called without order 
if 
$
\big{\{}a: a\mathcal A=\{0\}\big{\}}=\{0\}.
$
Following \cite{3}, $\mathcal A$ is called semisimple if the Gelfand map
$\mathcal A\rightarrow C_b(\Delta(\mathcal A))$ given by
$$
a\mapsto\widehat{a},\;\;\;\widehat{a}(\varphi)=\varphi(a)\;\;\;\;\;\;\;\;\;\;\;\;\;(\varphi\in\Delta(\mathcal A))
$$
is injective.
In other words,
$
\bigcap_{\varphi\in\Delta(\mathcal A)}\text{ker}(\varphi)=\{0\}.
$
We now have the following result which is known in the Banach case. 
The proof is straightforward and it is therefore omitted.

\begin{lemma}\label{withoutorder}
Every commutative semisimple Fr\'echet algebra is without order.
\end{lemma}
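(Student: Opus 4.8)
The plan is to exploit the characterization of semisimplicity through the Gelfand spectrum. Concretely, I would fix an arbitrary element $a\in\mathcal A$ satisfying $a\mathcal A=\{0\}$ and aim to prove that $a=0$; since the reverse inclusion $\{0\}\subseteq\{a:a\mathcal A=\{0\}\}$ is immediate, establishing $\{a:a\mathcal A=\{0\}\}\subseteq\{0\}$ yields the claim. By semisimplicity it suffices to verify that $a\in\ker(\varphi)$ for every character $\varphi\in\Delta(\mathcal A)$, since then $a\in\bigcap_{\varphi\in\Delta(\mathcal A)}\ker(\varphi)=\{0\}$.

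To carry this out, I would fix $\varphi\in\Delta(\mathcal A)$. As $\varphi$ is a nonzero multiplicative linear functional, there exists some $b\in\mathcal A$ with $\varphi(b)\neq 0$. The hypothesis $a\mathcal A=\{0\}$ gives in particular $ab=0$, whence $0=\varphi(ab)=\varphi(a)\varphi(b)$ by multiplicativity of $\varphi$. Since $\varphi(b)\neq 0$, this forces $\varphi(a)=0$, that is, $a\in\ker(\varphi)$. As $\varphi$ was arbitrary, the reduction of the previous paragraph applies and delivers $a=0$.

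The only point needing a word of care is the degenerate case $\Delta(\mathcal A)=\emptyset$: then the intersection defining semisimplicity is taken over the empty index set and equals all of $\mathcal A$, so semisimplicity already forces $\mathcal A=\{0\}$, which is vacuously without order. I anticipate no genuine obstacle here—the argument is purely algebraic and uses nothing about the Fr\'echet topology beyond the defining fact that characters are nonzero homomorphisms—which is precisely why the authors may record the proof as straightforward and omit it.
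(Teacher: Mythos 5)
Your argument is correct and is precisely the standard one the paper has in mind when it omits the proof as straightforward: for $a$ with $a\mathcal A=\{0\}$, each character $\varphi$ kills $a$ because $\varphi(a)\varphi(b)=\varphi(ab)=0$ for some $b$ with $\varphi(b)\neq 0$, and semisimplicity then forces $a=0$. The remark on the degenerate case $\Delta(\mathcal A)=\emptyset$ is a sensible extra precaution and does not affect the validity of the argument.
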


\begin{definition}
Let $(\mathcal{A},p_{\ell})$ be a Fr\'echet algebra. A bounded complex-valued
continuous function $\sigma$ defined on $\Delta(\mathcal A)$ is called a
{\text BSE}-function if there exist a bounded set $M$ in $\mathcal A$ and 
 a positive real number $\beta_{M}$ such that 
 for every finite number of complex-numbers
 $c_1,\cdots, c_n$ and the same number of 
 $\varphi_1,\cdots, \varphi_n$ in $\Delta(\mathcal A)$ the inequality
$$
\vert\sum_{i=1}^{n} c_{i}\sigma(\varphi_{i})\vert\leq\beta_{M}
P_{M}(\sum_{i=1}^{n}c_{i}\varphi_{i})
$$
holds. The set of all {\text BSE}-functions is denoted by $C_{\text{BSE}}(\Delta(\mathcal A))$.
\end{definition}

Let $(\mathcal{A},p_{\ell})$ be a Fr\'echet algebra, and for every $F\in {\mathcal A}^{\ast\ast}$, consider the restriction map $\sigma=F\vert_{\Delta(\mathcal A)}$.
Set
$${r_{\ell}}(\sigma):=P_{M_{\ell}^{\circ}\cap<\Delta(\mathcal A)>}(F)\;\;\;\;\;\;\;\;\;\;\;\;\;(\ell\in\mathbb{N}).$$
Clearly, $C_{\text{BSE}}(\Delta(\mathcal A))\subseteq \ell_{\infty}(\Delta(\mathcal A))$. We also have
$$r_{\ell}(\sigma)=sup\big{\lbrace}\vert\sum_{i=1}^{n} c_{i}\sigma(\varphi_{i})\vert:
P_{M_{\ell}}(\sum_{i=1}^{n}c_{i}\varphi_{i})\leq 1\big{\rbrace}\;\;\;\;\;\;\;\;\;\;\;\;\;(\ell\in\mathbb{N}).$$
For each $\ell\in\mathbb{N}$,
set $\alpha_{\ell}:=sup\lbrace p_{\ell}(a): a\in M\rbrace$.
Note that for each $\ell\in\mathbb{N}$,
$\alpha_{\ell}<\infty$ since  
$M$ is bounded; see \cite[Remark 23.2]{11}.
Hence, by applying the inequality (\ref{e1}), we have
${r_{\ell}}(\sigma)\leq \beta_{M}\alpha_{\ell}<\infty$ for each $\ell\in\mathbb{N}$.
In the sequel, we show that $C_{\text{BSE}}(\Delta(\mathcal A))$
is a semisimple Fr\'echet algebra with respect to the seminorms $(r_{\ell})_{\ell}$
and the pointwise product which equals the first Arens product. Indeed, for $F_{1}$ 
and $F_{2}$ in ${\mathcal A}^{\ast\ast}$, we have ${\sigma_{1}}.\sigma_{2}=F_{1}\square F_{2}$
where $\sigma_{1}=F_{1}{\vert}_{\Delta(\mathcal A)}$ and 
$\sigma_{2}=F_{2}{\vert}_{\Delta(\mathcal A)}$.

\begin{theorem}
Let $(\mathcal A, p_\ell)$ be a commutative semisimple 
Fr\'echet algebra. Then,
$\big{(}C_{\text{BSE}}(\Delta(\mathcal A)),r_\ell\big{)}$ is a commutative semisimple 
Fr\'echet subalgebra. 
\end{theorem}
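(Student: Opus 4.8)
The plan is to transport the Fr\'echet algebra structure already available on $(\mathcal A^{**},r_\ell)$ to $C_{\text{BSE}}(\Delta(\mathcal A))$ through the restriction map $R\colon\mathcal A^{**}\to\ell_\infty(\Delta(\mathcal A))$, $F\mapsto F|_{\Delta(\mathcal A)}$. First I would record that $C_{\text{BSE}}(\Delta(\mathcal A))$ sits inside $C_b(\Delta(\mathcal A))$ by the very definition of a BSE-function, that each $r_\ell$ is finite on it (as noted before the statement, $r_\ell(\sigma)\le\beta_M\alpha_\ell$), and that $(r_\ell)_\ell$ is an increasing family of seminorms, monotonicity being inherited from the fact that $(M_\ell^\circ)_\ell$ increases, so that the suprema defining $r_\ell$ are taken over increasing sets. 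Separation is checked on single characters: testing with $n=1$ gives $|\sigma(\varphi)|\le r_\ell(\sigma)\,P_{M_\ell}(\varphi)$, and since a nonzero $\varphi\in\Delta(\mathcal A)$ has $P_{M_\ell}(\varphi)>0$ for suitable $\ell$, vanishing of all $r_\ell(\sigma)$ forces $\sigma\equiv0$. Thus $(r_\ell)_\ell$ generates a Hausdorff metrizable locally convex topology.

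The algebraic heart is closure under products together with submultiplicativity. Here I would use the identity $\sigma_1\sigma_2=(F_1\,\square\,F_2)|_{\Delta(\mathcal A)}$ recorded above, whose justification is the computation $F_2\cdot\varphi=\sigma_2(\varphi)\varphi$ for $\varphi\in\Delta(\mathcal A)$ (valid because $\varphi$ is multiplicative), giving $\langle F_1\,\square\,F_2,\varphi\rangle=\sigma_1(\varphi)\sigma_2(\varphi)$. Since $F_1\,\square\,F_2\in\mathcal A^{**}$ and $r_\ell(H|_{\Delta(\mathcal A)})\le r_\ell(H)<\infty$ for every $H\in\mathcal A^{**}$, the product $\sigma_1\sigma_2$ is again a BSE-function, so $C_{\text{BSE}}(\Delta(\mathcal A))$ is a subalgebra of $C_b(\Delta(\mathcal A))$, and commutativity is immediate from the pointwise product. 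For submultiplicativity I would prove the extension lemma that, for each fixed $\ell$, the functional $\sum c_i\varphi_i\mapsto\sum c_i\sigma(\varphi_i)$ on $\langle\Delta(\mathcal A)\rangle$, which by definition is bounded by $r_\ell(\sigma)\,P_{M_\ell}$, admits an extension $F\in\mathcal A^{**}$ with $r_\ell(F)=r_\ell(\sigma)$; granting this, choose such $F,G$ for $\sigma,\tau$ and estimate, using (\ref{e5}),
$$r_\ell(\sigma\tau)=P_{M_\ell^\circ\cap\langle\Delta(\mathcal A)\rangle}(F\,\square\,G)\le P_{M_\ell^\circ}(F\,\square\,G)\le r_\ell(F)\,r_\ell(G)=r_\ell(\sigma)\,r_\ell(\tau).$$
Crucially the extensions may be chosen separately for each $\ell$, since the product identity holds for any extensions while $r_\ell(\sigma\tau)$ depends only on $\sigma\tau$.

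I expect this extension lemma to be the main obstacle. In the Banach case it is just norm-preserving Hahn--Banach, but in the Fr\'echet setting the naive Hahn--Banach extension of the above functional dominated by $r_\ell(\sigma)P_{M_\ell}$ need not be continuous for the strong topology of $\mathcal A^*$, i.e.\ need not lie in $\mathcal A^{**}$, because $P_{M_\ell}$ is the gauge of the bounded set $M_\ell^\circ$ rather than a strong seminorm. The way I would secure it is to work inside the bipolar $M_\ell^{\circ\circ}\subseteq\mathcal A^{**}$, which equals the $r_\ell$-unit ball, and to exploit that $M_\ell^\circ$ is $\sigma(\mathcal A^*,\mathcal A)$-compact by Alaoglu (as $M_\ell$ is a zero-neighborhood of $\mathcal A$): the finite-dimensional version of the extension is exactly the defining inequality for $r_\ell$, and a weak-$*$ compactness and finite-intersection argument then produces a single $F\in r_\ell(\sigma)\,M_\ell^{\circ\circ}$ agreeing with $\sigma$ on all of $\Delta(\mathcal A)$.

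Finally, the same extension lemma yields completeness almost for free: it shows $r_\ell(\sigma)=\inf\{r_\ell(F):F\in\mathcal A^{**},\,F|_{\Delta(\mathcal A)}=\sigma\}$, so $R$ is a quotient map onto $C_{\text{BSE}}(\Delta(\mathcal A))$ and $(C_{\text{BSE}}(\Delta(\mathcal A)),r_\ell)\cong\mathcal A^{**}/\ker R$ is a quotient of a Fr\'echet space by the closed subspace $\ker R$, hence itself Fr\'echet; alternatively one argues directly that an $(r_\ell)$-Cauchy sequence converges pointwise on $\Delta(\mathcal A)$ to a limit that remains BSE, by passing to the limit in the inequality defining $r_\ell$. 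Semisimplicity follows because each evaluation $\sigma\mapsto\sigma(\varphi)$ with $\varphi\in\Delta(\mathcal A)$ is a continuous character of $C_{\text{BSE}}(\Delta(\mathcal A))$, continuity coming from $|\sigma(\varphi)|\le r_\ell(\sigma)$ for suitable $\ell$, and these evaluations separate points, so the Gelfand map is injective. Assembling these facts gives that $(C_{\text{BSE}}(\Delta(\mathcal A)),r_\ell)$ is a commutative semisimple Fr\'echet subalgebra of $C_b(\Delta(\mathcal A))$.
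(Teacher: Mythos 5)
Your outline of the linear structure, the separation of points by $(r_\ell)$, the Arens-product identity $\sigma_1\sigma_2=(F_1\,\square\,F_2)|_{\Delta(\mathcal A)}$, and the semisimplicity via evaluation characters is sound, but the whole argument leans on an extension lemma that your proposed justification does not deliver: that for each \emph{fixed} $\ell$ a BSE-function $\sigma$ admits an extension $F\in\mathcal A^{**}$ with $r_\ell(F)=r_\ell(\sigma)$. You rightly note that a Hahn--Banach extension dominated by $r_\ell(\sigma)P_{M_\ell}$ need not be strongly continuous, since $M_\ell$ is a zero-neighbourhood rather than a bounded set, so $P_{M_\ell}$ is not a continuous seminorm on $\mathcal A^*$ with the strong topology. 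The compactness repair does not close this gap: Alaoglu--Bourbaki gives $\sigma(\mathcal A^*,\mathcal A)$-compactness of $M_\ell^{\circ}$ \emph{inside} $\mathcal A^*$, whereas your finite-intersection argument needs the set $\{F\in\mathcal A^{**}:r_\ell(F)\le C\}$ to be $\sigma(\mathcal A^{**},\mathcal A^*)$-compact. That set is the polar of the bounded (equicontinuous) set $M_\ell^{\circ}$, not of a neighbourhood of zero in the strong dual, so Alaoglu--Bourbaki does not apply; indeed it contains $\iota\big(\{a:p_\ell(a)\le C\}\big)$, which is unbounded whenever $\mathcal A$ is non-normable and on which elements of $\mathcal A^*$ are unbounded, so it cannot be relatively weak-$*$ compact. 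The finite-dimensional (Helly-type) step also breaks down, because an individual character $\varphi$ need not satisfy $P_{M_\ell}(\varphi)<\infty$ for the particular $\ell$ you fixed. Consequently both your proof of submultiplicativity and your quotient-map proof of completeness are unsupported.

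The lemma is, however, unnecessary. Submultiplicativity of $r_\ell$ has a direct proof: for $P_{M_\ell}(\sum_i c_i\varphi_i)\le 1$ one estimates $|\sum_i c_i\sigma(\varphi_i)\tau(\varphi_i)|\le r_\ell(\sigma)\,P_{M_\ell}(\sum_i c_i\tau(\varphi_i)\varphi_i)$, then $P_{M_\ell}(\sum_i c_i\tau(\varphi_i)\varphi_i)=\sup_{a\in M_\ell}|\sum_i c_i\varphi_i(a)\tau(\varphi_i)|\le r_\ell(\tau)\sup_{a\in M_\ell}P_{M_\ell}(\sum_i c_i\varphi_i(a)\varphi_i)$, and finally $\sum_i c_i\varphi_i(a)\varphi_i(b)=(\sum_i c_i\varphi_i)(ab)$ with $ab\in M_\ell$ because $p_\ell$ is submultiplicative. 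This two-step ``near-optimal element'' estimate is exactly how the paper proves that $\sigma_1\sigma_2$ is again a BSE-function, using the bounded set $M_1M_2$ and $\beta_{M_1}\beta_{M_2}$. For the Arens-product identity you only need \emph{some} extension $F_i\in\mathcal A^{**}$, and that you obtain safely by Hahn--Banach dominated by $\beta_M P_M$ with $M$ \emph{bounded} (so that $P_M$ is a continuous seminorm on the strong dual), as in Proposition \ref{t1}. Completeness should then be handled by your fallback route --- pointwise convergence of an $(r_\ell)$-Cauchy sequence and passage to the limit in the BSE-inequality, as the paper does --- with the added care that one must exhibit a single \emph{bounded} set $M$ witnessing the BSE-inequality for the limit, since control by the seminorms $P_{M_\ell}$ alone does not literally match the definition of a BSE-function.
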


\begin{proof}
We first show that $C_{\text{BSE}}(\Delta(\mathcal A))$ is a linear subspace of
$C_b(\Delta(\mathcal A))$. Suppose that $\sigma_1, \sigma_2 \in
C_{\text{BSE}}(\Delta(\mathcal A))$. Then, there exist bounded sets $M_{1}$ and 
$M_{2}$ in $\mathcal A$ and positive real numbers $\beta_{M_{1}}$ 
and $\beta_{M_{2}}$ such that for every finite number
 of complex-numbers $c_1,\cdots, c_n$ and the same number of 
 $\varphi_1,\cdots, \varphi_n$ in $\Delta(\mathcal A)$ we have
 $$
 \vert\sum_{i=1}^{n}c_i\sigma_1(\varphi_i)\vert\leq\beta_{M_{1}} P_{M_{1}}(\sum_{i=1}^{n}c_i\varphi_i)\;\;\;\;\;\text{and}\;\;\;\;\;
 \vert\sum_{i=1}^{n}c_i\sigma_2(\varphi_i)\vert\leq\beta_{M_{2}}
P_{M_{2}}(\sum_{i=1}^{n}c_i\varphi_i).
 $$
 Therefore,
\begin{align*}
\vert\sum_{i=1}^{n} c_i(\sigma_1+\sigma_2)(\varphi_i)\vert&=\vert
\sum_{i=1}^{n}\big{(}c_i\sigma_1(\varphi_i)+c_i
\sigma_2(\varphi_i)\big{)}\vert\\
&\leq\vert\sum_{i=1}^{n}c_i\sigma_1(\varphi_i)\vert+\vert\sum_{i=1}^{n}c_i\sigma_2(\varphi_i)\vert\\
&\leq\beta_{M_{1}} P_{M_{1}}(\sum_{i=1}^{n}c_i\varphi_i)+\beta_{M_{2}}
P_{M_{2}}(\sum_{i=1}^{n}c_i\varphi_i)\\
&=\beta_{M}P_M(\sum_{i=1}^{n} c_i\varphi_i),
\end{align*}
where $M=M_{1} \cup M_{2}$ and $\beta_{M}=\beta_{M_{1}}+\beta_{M_{2}}$. 
Thus, $\sigma_{1}+\sigma_{2} \in C_{\text{BSE}}(\Delta(\mathcal A))$. 
We now show that the product of two functions in $C_{\text{BSE}}(\Delta(\mathcal A))$ 
is a {\text BSE}-function. Consider an arbitrary $\varepsilon>0$.
Due to the definition of $P_{M_{1}}$, we can 
choose $a_{1}\in M_{1}$ such that
\begin{center}
$P_{M_{1}}(\sum_{i=1}^{n} c_i\sigma_2(\varphi_i)\varphi_i)\leq
\vert \sum_{i=1}^{n}c_i
\sigma_2(\varphi_i)\varphi_i (a_1)\vert+\varepsilon$.
\end{center}
Moreover, we can choose $a_{2}\in\mathcal M_{2}$ such that 
\begin{center}
$P_{M_{2}}(\sum_{i=1}^{n} c_i\varphi_i(a_1)\varphi_i)\leq
\vert \sum_{i=1}^{n}c_i\varphi_i(a_1)\varphi_i (a_{2})
\vert+\varepsilon$.
\end{center}
Consequently,
\begin{align*}
\vert\sum_{i=1}^{n}c_i(\sigma_1\sigma_2)(\varphi_i)\vert
&\leq \beta_{M_{1}}P_{M_{1}}(\sum_{i=1}^{n}c_i\sigma_2(\varphi_i)\varphi_i)
\\
&\leq \beta_{M_{1}}\big{(}\vert \sum_{i=1}^{n}c_i\sigma_2(\varphi_i)\varphi_i (a_{1})
\vert+\varepsilon\big{)} \\
&\leq \beta_{M_{1}}\big{(}\beta_{M_{2}}P_{M_{2}}(\sum_{i=1}^{n}c_i\varphi_i (a_{1})\varphi_i)+\varepsilon\big{)} \\
&\leq  \beta_{M_{1}}\bigg{(}\beta_{M_{2}}\big{(}\vert \sum_{i=1}^{n}c_i
\varphi_i (a_{1})\varphi_i(a_2)\vert+\varepsilon \big{)}+\varepsilon\bigg{)} \\
&=\beta_{M_{1}}\bigg{(}\beta_{M_{2}}\big{(}\vert \sum_{i=1}^{n}c_i
\varphi_i (a_{1}a_2)\vert+\varepsilon \big{)}+\varepsilon\bigg{)}.
\end{align*}
We know from \cite{11} that 
$M:=M_{1}M_{2}$ is a bounded set in $\mathcal A$, and 
$$P_{M}(\sum_{i=1}^{n} c_i\varphi_i)=sup\big{\lbrace}\vert
\sum_{i=1}^{n}c_i\varphi_i(ab) \vert: a\in M_{1},\;b\in M_{2}\big{\rbrace}.
$$
Also, set $\beta_M:=\beta_{M_{1}}\beta_{M_{2}}$. Since $\varepsilon$ is arbitrary, by above inequalities, we have
$$
\vert\sum_{i=1}^{n}c_i(\sigma_1\sigma_2)(\varphi_i)\vert\leq \beta_MP_M(\sum_{i=1}^{n}c_i
\varphi_i).
$$
For this reason, $\sigma_1\sigma_2\in C_{\text{BSE}}(\Delta(\mathcal A))$.
Furthermore, it is obvious to see that 
$$r_{\ell}(\sigma_1\sigma_2)\leq r_{\ell}(\sigma_1)r_{\ell}(\sigma_2)\;\;\;\;\;\;\;\;\;\;(\ell\in\mathbb{N}),$$
which 
 implies that each $r_{\ell}$ is a submultiplicative seminorm. 
 
 To prove that $\big{(}C_{\text{BSE}}(\Delta(\mathcal A)),r_\ell\big{)}$ is a Fr\'echet algebra, it is enough to show that
 $\big{(}C_{\text{BSE}}(\Delta(\mathcal A)),r_\ell\big{)}$ is complete.
 For this reason, assume that 
$(\sigma_n)_n$ is a Cauchy sequence in $C_{\text{BSE}}(\Delta(\mathcal
A))$. Therefore, for each $\ell\in\mathbb{N}$ and $\varepsilon>0$, there exists
$N_\ell>0$ such that for all $m,n\geq N_\ell$, we have
$r_\ell(\sigma_n-\sigma_m)<\varepsilon$. 
Thus,
$$
\vert(\sigma_n-\sigma_m)(\varphi)\vert\leq r_\ell(\sigma_n-\sigma_m)<\varepsilon\;\;\;\;\;\;\;\;\;\;\;\big{(}\varphi\in \Delta(\mathcal A)\big{)}.
$$
Consequently, for each $\varphi\in \Delta(\mathcal A)$,
$\big{(}\sigma_n(\varphi)\big{)}_n$ is a Cauchy
sequence
in $\mathbb{C}$ and so is convergent to some complex-number.
We set $\sigma(\varphi):=\underset{n}{\lim}\;\sigma_n(\varphi)$, for 
 $\varphi$ in $\Delta(\mathcal A)$. 
 To show
 $\sigma\in C_{\text{BSE}}(\Delta(\mathcal A))$, 
 let $c_1,\cdots, c_k \in \CC $ and $\varphi_1,\cdots, \varphi_k$ in $\Delta(\mathcal A)$.
 Since each $\sigma_n$ belongs to $C_{\text{BSE}}(\Delta(\mathcal A))$,
 there exist a bounded set $M$ in $\mathcal A$ and 
 a positive real number $\beta_{M}$ such that the inequality
$$\vert\sum_{i=1}^{k} c_{i}\sigma_n(\varphi_{i})\vert\leq\beta_{M}
P_{M}(\sum_{i=1}^{k}c_{i}\varphi_{i})$$
holds. In addition, by definition of $r_\ell$, we have
$$\vert\sum_{i=1}^{k} c_i(\sigma_n-\sigma_m)(\varphi_i)\vert\leq r_\ell(\sigma_n-\sigma_m)<\varepsilon,$$
 for all $n,m\geq N_{\ell}$ and $P_{M_{\ell}}(\sum_{i=1}^{k}c_i\varphi_i)\leq 1.$
Therefore, by taking the limit with respect to $m$, we have
$$\vert\sum_{i=1}^{k} c_i(\sigma_n-\sigma)(\varphi_i)\vert\leq\varepsilon,$$
 for all $n\geq N_{\ell}$. Hence,
 $$
 \vert\sum_{i=1}^{k} c_{i}\sigma(\varphi_{i})\vert =
 \vert\sum_{i=1}^{k} c_{i}(\sigma_n-\sigma)(\varphi_i)\vert +
\vert\sum_{i=1}^{k} c_{i}\sigma_n(\varphi_i)\vert\leq \varepsilon + \beta_{M}
P_{M}(\sum_{i=1}^{n}c_{i}\varphi_{i}),
 $$
 which implies that 
 $\sigma\in C_{\text{BSE}}(\Delta(\mathcal A))$. Thus, 
$\big{(}C_{\text{BSE}}(\Delta(\mathcal A)),r_\ell\big{)}$ is a Fr\'echet algebra. 

For semisimplicity, note that
 for each $\varphi\in\Delta(\mathcal A)$,
the function $\Phi_{\varphi}$ defined as
$$\Phi_{\varphi}(\sigma):=\sigma(\varphi) \;\;\;\;\;\;\;\;\;\;\big{(}\sigma\in
C_{\text{BSE}}(\Delta(\mathcal A))\big{)},$$
 is a nonzero linear multiplicative
functional on $C_{\text{BSE}}(\Delta(\mathcal A))$. Consequently,
\begin{center}
$\underset{\Phi\in\Delta(C_{\text{BSE}}(\Delta (A))}{\bigcap}\ker
\Phi\subseteq \underset{\varphi\in\Delta(\mathcal A)}{\bigcap}
\ker \Phi_{\varphi}=\lbrace 0\rbrace$.
\end{center}
This inclusion implies that $C_{\text{BSE}}(\Delta(\mathcal A))$ is semisimple.
\end{proof}

\begin{lemma}
Let $(\mathcal A,p_\ell)$ be a commutative Fr\'echet algebra and
 $C_{\text{BSE}}(\Delta(\mathcal A))=C_{b}(\Delta(\mathcal A))$. 
 Then, for $\varphi_1,\cdots,\varphi_n\in \Delta(\mathcal A)$ and
$c_1,\cdots,c_n\in\mathbb{C}$, there exists $a\in \mathcal A$ such that 
$\widehat{a}(\varphi_i)=c_i$ for each $i=1,\cdots,n$.
\end{lemma}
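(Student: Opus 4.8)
The plan is to recast the assertion as the surjectivity of a single linear map and then use the standing hypothesis $C_{\text{BSE}}(\Delta(\mathcal A))=C_b(\Delta(\mathcal A))$ to rule out the one obstruction to surjectivity. We may assume that $\varphi_1,\dots,\varphi_n$ are pairwise distinct. Consider the linear map
\[
\Psi\colon\mathcal A\longrightarrow\mathbb C^n,\qquad \Psi(a)=\big(\widehat a(\varphi_1),\dots,\widehat a(\varphi_n)\big)=\big(\varphi_1(a),\dots,\varphi_n(a)\big).
\]
The conclusion is exactly that $\Psi$ is onto, so it suffices to establish surjectivity.

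First I would note that the range $\Psi(\mathcal A)$ is a linear subspace of $\mathbb C^n$, hence it equals all of $\mathbb C^n$ unless some nonzero vector $(d_1,\dots,d_n)$ annihilates it, i.e. unless $\sum_{i=1}^n d_i\varphi_i(a)=0$ for every $a\in\mathcal A$. In that event $\sum_{i=1}^n d_i\varphi_i=0$ in $\mathcal A^*$, so $P_M\big(\sum_{i=1}^n d_i\varphi_i\big)=0$ for every bounded $M\subseteq\mathcal A$. I would then derive a contradiction. Since $\Delta(\mathcal A)$ carries the Gelfand topology inherited from the weak$^*$ topology of $\mathcal A^*$, it is a completely regular Hausdorff space, so the distinctness of $\varphi_1,\dots,\varphi_n$ lets me pick $\sigma\in C_b(\Delta(\mathcal A))$ with $\sigma(\varphi_i)=\overline{d_i}$ for each $i$. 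By hypothesis $\sigma\in C_{\text{BSE}}(\Delta(\mathcal A))$, so there are a bounded $M$ and $\beta_M>0$ with
\[
\Big|\sum_{i=1}^n d_i\sigma(\varphi_i)\Big|\le \beta_M\,P_M\Big(\sum_{i=1}^n d_i\varphi_i\Big)=0,
\]
whereas the left-hand side equals $\sum_{i=1}^n|d_i|^2>0$. This contradiction shows that no annihilating vector exists, whence $\Psi$ is surjective and the required $a$ can be found.

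The only step demanding genuine care is the construction of the bounded continuous interpolant $\sigma$: it rests on the complete regularity of $\Delta(\mathcal A)$ in its Gelfand topology, which permits separating the finitely many distinct points $\varphi_i$ and prescribing arbitrary values on them. I expect this to be the main obstacle, the remainder being elementary linear algebra together with one use of the BSE inequality. I would also remark that the obstruction above can be removed without the hypothesis at all: distinct characters are automatically linearly independent in $\mathcal A^*$ (independence of characters), so the relation $\sum_i d_i\varphi_i=0$ already forces each $d_i=0$. Thus the assumption $C_{\text{BSE}}(\Delta(\mathcal A))=C_b(\Delta(\mathcal A))$, though convenient here, is more than the conclusion strictly requires.
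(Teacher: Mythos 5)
Your proof is correct, but it follows a genuinely different route from the paper's. The paper builds the interpolant $\sigma_F=\sum_i c_if_i$ by Urysohn-type separation, uses the hypothesis to place it in $C_{\text{BSE}}(\Delta(\mathcal A))$, defines $T_F(\sum_i d_i\varphi_i)=\sum_i d_ic_i$ on the finite-dimensional span $\langle\varphi_1,\cdots,\varphi_n\rangle$, deduces continuity from the BSE inequality, extends $T_F$ by Hahn--Banach to a weak$^*$-continuous functional on $\mathcal A^{*}$, and finally identifies that extension with $\widehat{a}$ for some $a\in\mathcal A$. You instead recast the claim as surjectivity of $a\mapsto(\varphi_1(a),\cdots,\varphi_n(a))$ onto $\mathbb C^n$ and note that the only obstruction is a nontrivial relation $\sum_i d_i\varphi_i=0$, which you kill by applying the BSE inequality to a bounded continuous interpolant of $(\overline{d_1},\cdots,\overline{d_n})$. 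The two arguments invoke the hypothesis at exactly the same spot: the paper needs the BSE inequality to see that $T_F$ is well defined on the span, which is precisely your annihilator statement; but your version dispenses with the Hahn--Banach extension and with the identification of weak$^*$-continuous functionals on $\mathcal A^{*}$ with elements of $\mathcal A$, which is the least transparent step of the paper's proof. Your closing remark is also right: distinct characters are linearly independent for purely algebraic reasons, so the interpolation conclusion holds in any commutative algebra and the hypothesis $C_{\text{BSE}}(\Delta(\mathcal A))=C_b(\Delta(\mathcal A))$ is not actually needed. (Both your argument and the paper's tacitly assume the $\varphi_i$ are pairwise distinct, without which the statement cannot hold as written; this is a harmless reading of the lemma.)
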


\begin{proof}
Let $F=\lbrace\varphi_1,\cdots,\varphi_n\rbrace$. By Urysohn's lemma, there exists 
 $f_{i}\in C_{b}(\Delta(\mathcal A))$ in which $f_{i}(\varphi_{i})=1$ 
 and $f_{i}(\varphi_{j})=0$ for $i\neq j$. 
 Set $\sigma_{F}:=\sum_{i=1}^{n}c_{i}f_{i}$. Then,
$\sigma_{F} \in C_{b}(\Delta(\mathcal A))$ and
 $\sigma_{F}(\varphi_i)=c_i$ for each $i=1,\cdots,n$. 
 We now define the linear map
 $$T_{F}:\langle\varphi_1,\cdots,\varphi_n\rangle\rightarrow\mathbb{C}, \;\;\;\;
T_{F}(\sum_{i=1}^{n} d_{i}\varphi_{i})=\sum_{i=1}^{n} d_{i}c_{i}$$ 
where $d_1,\cdots,d_n\in\mathbb{C}$.
 Moreover, $\sigma_{F} \in C_{\text{BSE}}(\Delta(\mathcal A).$ Thus, there exist
 a bounded set $M$ in $\mathcal A$ and a positive real number
 $\beta_{M}$ such that
\begin{align*}
\vert T_{F}(\sum_{i=1}^{n} d_{i}\varphi_{i})\vert =
\vert \sum_{i=1}^{n} d_{i}\sigma_{F}(\varphi_{i})\vert
\leq\beta_{M}P_{M}(\sum_{i=1}^{n}d_{i}\varphi_{i}).
\end{align*}
Hence, $T_{F}$ is continuous. In addition, $\langle\varphi_1,\cdots,\varphi_n\rangle$
 is of finite dimension and so $T_{F}$ is weak$^{*}$-continuous. 
 Therefore by applying the general framework of the Hahn-Banach theorem \cite[Proposition 22.12]{11}, there exists 
 linear and weak$^{*}$-continuous function $T$ such that 
 $T{\vert}_{\langle\varphi_1,\cdots,\varphi_n\rangle}=T_{F}$. 
Thus,$$T(\varphi_i)=\sigma_{F}(\varphi_i)=c_{i}. $$ 
Furthermore, there exists $a\in \mathcal A$ such that
 $T=\widehat{a}$. Then, $\varphi_i(a)=\widehat{a}(\varphi_i)=c_{i}$.  
\end{proof}

\begin{proposition}\label{t1}
Let $(\mathcal A,p_\ell)$ be a commutative semisimple Fr\'echet algebra. Then, 
\begin{enumerate}
\item[(i)] $\widehat{a}\in C_{\text{BSE}}(\Delta(\mathcal A))$ for each $a\in \mathcal A.$
\item[(ii)] $C_{\text{BSE}}(\Delta(\mathcal A))$ equals the set of all
 $\sigma \in C_{b}(\Delta(\mathcal A))$ where 
there exists a bounded net $(a_{\lambda})_{\lambda}$ in $\mathcal A$
 with $\lim\widehat{a_{\lambda}}(\varphi)=\sigma(\varphi)$
 for all $\varphi \in \Delta(\mathcal A).$
\item[(iii)] $C_{\text{BSE}}(\Delta(\mathcal A))=\mathcal A^{**}
\vert_{\Delta(\mathcal A)}\bigcap C_{b}(\Delta(\mathcal A))$.
\end{enumerate}
\end{proposition}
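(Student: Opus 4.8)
The plan is to treat the three parts in turn, the engine throughout being a Hahn--Banach extension that represents each BSE-function as the restriction to $\Delta(\mathcal A)$ of an element of $\mathcal A^{**}$, together with a bipolar argument realizing that element as a weak$^*$-limit of a bounded net from $\mathcal A$. Part (i) is immediate from the definition: given $a\in\mathcal A$, take $M=\{a\}$ and $\beta_M=1$, so that $|\sum_i c_i\widehat a(\varphi_i)|=|(\sum_i c_i\varphi_i)(a)|=P_{\{a\}}(\sum_i c_i\varphi_i)$ for all $c_i\in\CC$ and $\varphi_i\in\Delta(\mathcal A)$; since $\widehat a\in C_b(\Delta(\mathcal A))$ by semisimplicity, $\widehat a\in C_{\text{BSE}}(\Delta(\mathcal A))$.

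For the forward implication of (ii), fix $\sigma\in C_{\text{BSE}}(\Delta(\mathcal A))$ with witnessing bounded set $M$ and constant $\beta_M$, and define $T$ on the span $\langle\Delta(\mathcal A)\rangle\subseteq\mathcal A^*$ by $T(\sum_i c_i\varphi_i):=\sum_i c_i\sigma(\varphi_i)$. The BSE-inequality makes $T$ well defined (it annihilates any combination that is zero in $\mathcal A^*$, since then the right-hand side vanishes) and shows $|T(f)|\le\beta_M P_M(f)$ there; as domination by the single continuous seminorm $\beta_M P_M$ forces continuity for the strong topology, the Hahn--Banach theorem (\cite[Proposition 22.12]{11}, just as in the preceding lemma) yields $F\in\mathcal A^{**}$ with $|F(f)|\le\beta_M P_M(f)$ for all $f\in\mathcal A^*$ and $F|_{\Delta(\mathcal A)}=\sigma$.

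The crucial step converts $F$ into a bounded net. Rewriting $|F(f)|\le\beta_M P_M(f)=\sup_{b\in\beta_M M}|\widehat b(f)|$ says exactly that $F$ lies in the bipolar $(\beta_M M)^{\circ\circ}$ in the dual pair $\langle\mathcal A^{**},\mathcal A^*\rangle$; by the bipolar theorem this is the $\sigma(\mathcal A^{**},\mathcal A^*)$-closure of the absolutely convex hull of $\beta_M M$, a set which lies in $\iota(\mathcal A)$ and is bounded in $\mathcal A$ (the absolutely convex hull of a bounded set is bounded). Thus $F$ is a weak$^*$-limit of a bounded net $(a_\lambda)$ in $\mathcal A$, and evaluating at $\varphi\in\Delta(\mathcal A)\subseteq\mathcal A^*$ gives $\varphi(a_\lambda)\to F(\varphi)=\sigma(\varphi)$. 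Conversely, if $(a_\lambda)$ is bounded with $\widehat{a_\lambda}\to\sigma$ pointwise, choosing a bounded $M$ containing the net gives $|\sum_i c_i\widehat{a_\lambda}(\varphi_i)|=|(\sum_i c_i\varphi_i)(a_\lambda)|\le P_M(\sum_i c_i\varphi_i)$, and passing to the limit yields the BSE-inequality with $\beta_M=1$.

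Part (iii) then follows with little extra work: the inclusion $\subseteq$ is witnessed by the same $F\in\mathcal A^{**}$ produced above together with $C_{\text{BSE}}(\Delta(\mathcal A))\subseteq C_b(\Delta(\mathcal A))$, while for $\supseteq$ any $F\in\mathcal A^{**}$ with $\sigma:=F|_{\Delta(\mathcal A)}\in C_b(\Delta(\mathcal A))$ is strongly continuous, hence $|F(f)|\le C\,P_M(f)$ for some bounded $M$ and $C>0$, giving $|\sum_i c_i\sigma(\varphi_i)|=|F(\sum_i c_i\varphi_i)|\le C\,P_M(\sum_i c_i\varphi_i)$ at once. I expect the bounded-net step of (ii) to be the main obstacle: one must verify that the bipolar of $\beta_M M$ remains inside the bounded absolutely convex hull in $\iota(\mathcal A)$ and that the approximating net can therefore be taken bounded, which is precisely where the Fréchet structure (the strong bidual, completeness of $(\mathcal A^{**},r_\ell)$, and boundedness of absolutely convex hulls of bounded sets) does the real work.
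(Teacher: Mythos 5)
Your proof is correct and follows the same overall architecture as the paper's: part (i) from the definition, part (ii) by a Hahn--Banach extension of $\sigma$ to some $F\in\mathcal A^{**}$ dominated by $\beta_M P_M$ followed by an approximation of $F$ by a bounded net, and part (iii) by strong continuity of elements of $\mathcal A^{**}$ in one direction and reuse of the extension in the other. That said, you diverge from the paper in three places, each time to good effect. In (i) you take $M=\{a\}$ and $\beta_M=1$, so that $|\sum_i c_i\widehat a(\varphi_i)|=P_{\{a\}}(\sum_i c_i\varphi_i)$ exactly; this gives the BSE inequality uniformly in the $c_i,\varphi_i$, whereas the paper's appeal to continuity of the single functional $\sum_i c_i\varphi_i$ produces a constant $K$ and a set $M$ that a priori depend on that functional, which is not literally what the definition requires. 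In (ii), the step the paper handles by simply writing ``let $F\in w^\ast\text{-}cl(\overline F)$; then there exists a bounded net $(a_\lambda)$ with $\widehat{a_\lambda}\to F$'' is exactly the step you isolate and prove: the bound $|F(f)|\le\sup_{b\in\beta_M M}|f(b)|$ places $F$ in the bipolar of $\iota(\beta_M M)$ for the pairing $\langle\mathcal A^*,\mathcal A^{**}\rangle$, and the bipolar theorem exhibits $F$ as a weak$^*$-limit of a net from the (bounded) absolutely convex hull of $\beta_M M$. This is the genuine content of the implication and your version supplies the justification the paper omits. In (iii) you obtain the inclusion $C_{\text{BSE}}(\Delta(\mathcal A))\subseteq\mathcal A^{**}|_{\Delta(\mathcal A)}\cap C_b(\Delta(\mathcal A))$ by reusing the $F$ already built in (ii), which is shorter than the paper's construction of finite-dimensional extensions $\overline F_V$ and a cluster-point argument. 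One caveat, shared with the paper rather than specific to your write-up: boundedness of $\widehat a$ on $\Delta(\mathcal A)$ (needed for $\widehat a\in C_b(\Delta(\mathcal A))$ in (i)) is not automatic for a general Fr\'echet algebra and is being assumed implicitly; neither your proof nor the paper's establishes it.
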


\begin{proof} (i) 
Let $\varphi_1,\cdots,\varphi_n\in \Delta(\mathcal A)$ and
$c_1,\cdots,c_n\in\mathbb{C}$. 
Since $\sum_{i=1}^{n} c_i \varphi_i
\in{\mathcal A}^\ast$, following \cite{11},
there exist $K>0$ and a bounded set
$M\subseteq \mathcal A$ such that
\begin{center}
$\vert\sum_{i=1}^{n} c_i \varphi_i(a) \vert
\leq K P_M(\sum_{i=1}^{n} c_i  \varphi_i )\;\;\;\;\;\;\;\;\;\;\;\;(a\in\mathcal A)$.
\end{center}
Hence, $\vert\sum_{i=1}^{n} c_i \widehat{a}(\varphi_i)\vert
\leq K P_M(\sum_{i=1}^{n} c_i  \varphi_i )$, for each $a\in\mathcal A$.
Now, suppose that the net $(f_\alpha)_\alpha$ is convergent to $f$ in ${\mathcal
A}^\ast$ with respect to the strong topology.
Thus, for each bounded subset $M\subseteq\mathcal A$, we have
 $P_M (f_{\alpha} - f)\longrightarrow_\alpha 0.$ In particular,
 for $a\in\mathcal{A}$, set 
$M:=\lbrace a\rbrace$. Then,
$$
\vert\widehat{a}(f_{\alpha})-\widehat{a}(f)\vert=\vert f_{\alpha}(a)-f(a)\vert =P_M(f_{\alpha}-f)\longrightarrow_\alpha 0.
$$
It follows that $\widehat{a}$ is continuous, for each $a\in\mathcal{A}$. 
Therefore, $\widehat{a}$ is a {\text BSE}-function, and obviously $r_\ell
(\widehat{a})\leq K$, for each $a\in\mathcal{A}$. 

(ii) Let $\sigma \in C_{b}(\Delta(\mathcal A))$ be such that there exists 
a bounded net $(a_{\lambda})_{\lambda}$ in $\mathcal A$ 
with $\lim\widehat{a_{\lambda}}(\varphi)
=\sigma(\varphi)$, for all $\varphi\in \Delta(\mathcal A)$. To show $\sigma \in C_{\text{BSE}}(\Delta(\mathcal A))$,
 let $c_1,\cdots,c_n\in\mathbb{C}$ and $\varphi_1,\cdots,\varphi_n\in \Delta(\mathcal A)$. Then, for arbitrary $\varepsilon>0$,
  we have  
\begin{align*}
\vert \sum_{i=1}^{n}c_i\sigma (\varphi_i
)\vert&\leq\vert\sum_{i=1}^{n}c_i\widehat{a_\lambda}
(\varphi_i)\vert+\vert\sum_{i=1}^{n}
c_i(\widehat{a_\lambda}(\varphi_i)-\sigma (\varphi_i ))\vert\\
&\leq \beta_{M}P_{M}(\sum_{i=1}^{n}c_i
\varphi_i)+\varepsilon,
\end{align*}
where $M$ is a bounded set in $\mathcal A$ and $\beta_{M}$ is a positive real number.
Since $\varepsilon$ is arbitrary, $\sigma\in C_{\text{BSE}}(\Delta(\mathcal A))$. 

Conversely, let 
$\sigma\in C_{\text{BSE}}(\Delta(\mathcal A))$. Since $\Delta(\mathcal A)$ endowed
with the weak$^*$ topology is a locally convex space, there exists a linear map
$\overline{\sigma}:span(\Delta(\mathcal A)) \longrightarrow
\mathbb{C}$ such that
$\overline{\sigma}(\varphi)=\sigma(\varphi)$ for each
$\varphi\in\Delta(\mathcal A)$; see \cite{9-1} and  
also see \cite[Proposition 22.12]{11}, the general framework for the Hahn-Banach theorem and its consequences.
On the other hand, there
exist a bounded set $M$ of $\mathcal A$ and 
 a positive real number $\beta_{M}$ such that for every finite number
 of complex-numbers $c_1,\cdots, c_n$ and the same number of 
 $\varphi_1,\cdots, \varphi_n$ in $\Delta(\mathcal A)$ the inequality
%\begin{equation}\label{e1}
$$\vert\sum_{i=1}^{n} c_{i}\sigma(\varphi_{i})\vert\leq\beta_{M}
P_{M}(\sum_{i=1}^{n}c_{i}\varphi_{i})$$
%\end{equation}
holds. 
Note that if $\sum_{i=1}^{n}c_i\varphi_i=0$, then $\sum_{i=1}^{n}
c_i\sigma(\varphi_i)=0.$ Consequently, $\overline{\sigma}$ is
well-defined. Moreover, $\vert\overline{\sigma}(f)\vert\leq\beta_M
\,P_{M}(f)$ for $f \in span(\Delta(\mathcal A))$. In addition, since $\overline{\sigma}$ is linear and
weak$^*$-continuous on the closure of
 $span(\Delta(\mathcal A))$, using again \cite[Proposition 22.12]{11},
$\overline{\sigma}$ has a unique
weak$^{*}$-continuous extension $\overline{F}$ to
${\mathcal A}^{*}$ such that $\vert \overline{F}(f)\vert\leq\beta_M
P_{M}(f)$ for all $f\in {\mathcal A}^*$. Let 
$F\in w^\ast\text{-}cl(\overline{F})$. Then, there exists a bounded net $(a_\lambda)_{\lambda}$ in $\mathcal A$ 
where $\widehat{a_\lambda}\overset{w^\ast}{\longrightarrow}_\lambda F$. Thus,
$$\widehat{a_\lambda}(\varphi)\longrightarrow_\lambda F(\varphi)=
\overline{\sigma}(\varphi)=\sigma(\varphi)\quad\;\;\;\;(\varphi\in\Delta(\mathcal A)).$$

(iii) Let $\sigma\in\mathcal A^{**}\vert_{\Delta(\mathcal A)}\bigcap C_{b}(\Delta(\mathcal A))$.  
Following \cite{11}, there exist $K>0$ and a bounded set 
$M$ of $\mathcal A$ such that $\vert \sigma(\varphi)\vert\leq K P_{M}(\varphi)$ for each $\varphi\in {\mathcal A}^\ast$. In
 particular,
 for every finite number
 of complex-numbers $c_1,\cdots, c_n$ and the same number of 
 $\varphi_1,\cdots, \varphi_n$ in $\Delta(\mathcal A)$
 we have
$$
\vert \sum_{i=1}^{n}c_i \sigma(\varphi_i)\vert  
\leq K P_{M}(\sum_{i=1}^{n}c_i(\varphi_i)).
$$
Therefore, $\sigma \in C_{\text{BSE}}(\Delta(\mathcal A))$. 

Conversely,
 let $\sigma \in C_{\text{BSE}}(\Delta(\mathcal A))$, and suppose that
 $\varphi_1,\cdots, \varphi_n\in\Delta(\mathcal A)$. Set $V:=\langle\varphi_1,\cdots, \varphi_n\rangle$.
 Consider the linear function 
 $$F _{V}:V\rightarrow \mathbb{C},\;\; \;\;
 F _{V}(\sum_{i=1}^{n}c_i \varphi_i)=\sum_{i=1}^{n}c_i \sigma(\varphi_i)$$
 where $c_1,\cdots,c_n\in\mathbb{C}$.
 Therefore, by assumption, there exist a bounded set $M$ in $\mathcal A$ and a positive real number $\beta_M$
 such that
 $$\vert F _{V}(\sum_{i=1}^{n}c_i \varphi_i)\vert \leq 
\beta_{M}P_{M}(\sum_{i=1}^{n}c_i \varphi_i).$$
Thus, 
$F _{V}$ is continuous and 
 there exists a weak$^{*}$-continuous extension $\overline{F} _{V}\in{\mathcal A}^{**}$, where
 $\vert \overline{F} _{V}(f)\vert \leq \beta_{M}P_{M}(f)$ for each $f\in{\mathcal A}^{*}$. 
 If now $\Psi\in w^{*}\text{-}cl({\overline{F} _V})$, then 
 there exists a net $(\Psi_{\alpha})_{\alpha}\subseteq \overline{F} _V$
 such that $\Psi_{\alpha}\overset{w^\ast}{\longrightarrow}_{\alpha} \Psi$.
 Consequently, there exists $\alpha_{0}$ 
 such that for each $\alpha\geq\alpha_{0}$, we have
 $$\vert\Psi_{\alpha}(f)-\Psi(f)\vert<\varepsilon\;\;\;\;\;\;\;\;\;\;\;(f\in{\mathcal A}^{*}).$$ 
In particular, if $f=\sum_{i=1}^{n}c_i \varphi_i$ for $c_1,\cdots,c_n\in\mathbb{C}$,
 then  
 $$\vert\sum_{i=1}^{n}c_i \sigma(\varphi_i)-\Psi(\sum_{i=1}^{n}c_i \varphi_i)\vert<\varepsilon.$$
Specifically, $\Psi(\varphi)=\sigma(\varphi)$ for $\varphi\in \Delta(\mathcal A).$ On the other hand, 
$\sigma\in C_{b}(\Delta(\mathcal A))$, and so 
 $\sigma\in \mathcal A^{**}\vert_{\Delta(\mathcal A)}\bigcap C_{b}(\Delta(\mathcal A)).$
\end{proof}

\section{The multiplier algebra and {\text BSE}-Fr\'echet algebras}

Consider the Fr\'echet algebra $(\mathcal{A},p_\ell)$.
Let us first recall from \cite{10} that
the map $T:\mathcal
A\rightarrow\mathcal A$ is called a multiplier on $\mathcal A$ if
$T(a)b=aT(b)$ for all $a,b\in \mathcal A$. The set of all
multipliers on $\mathcal A$ will be denoted by $M(\mathcal A)$.
Let now
\begin{center}
$B(\mathcal A)=\lbrace T:\mathcal A\longrightarrow \mathcal A
\,\,\vert \,\,T \,\,is \,\, linear \,\, and \,\,
continuous\rbrace$.
\end{center}
The strong operator topology on $B(\mathcal
A)$ is generated by the family of seminorms $(
q_{\ell,a})$, defined as
%\begin{center}
 $q_{\ell,a}(T):=p_\ell(Ta)$
%\end{center}
for all $a\in\mathcal A$ and $\ell\in \mathbb{N}$.
The following result is a generalization of \cite[Theorem
1.1.2]{10}.

\begin{proposition}
Let $(\mathcal A,p_\ell)$ be a commutative semisimple Fr\'echet algebra.
 Then, $M(\mathcal A)$ endowed with the
strong operator topology, is a unital and commutative complete
locally convex subalgebra of $B(\mathcal A)$.
\end{proposition}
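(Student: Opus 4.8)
The plan is to establish, in turn, the four assertions: that $M(\mathcal A)\subseteq B(\mathcal A)$, that $M(\mathcal A)$ is a subalgebra, that it is unital and commutative, and that it is complete for the strong operator topology. The single tool used throughout is that $\mathcal A$ is without order by Lemma~\ref{withoutorder}, so that $x\mathcal A=\{0\}$ forces $x=0$.

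First I would show that each $T\in M(\mathcal A)$ is automatically linear and continuous. For $a,b\in\mathcal A$, $\lambda\in\CC$ and every $c\in\mathcal A$, the defining relation $T(x)c=xT(c)$ gives $\big(T(a+b)-T(a)-T(b)\big)c=0$ and $\big(T(\lambda a)-\lambda T(a)\big)c=0$; since $\mathcal A$ is without order, $T$ is linear. For continuity I would invoke the closed graph theorem for Fr\'echet spaces \cite{11}: if $a_n\to a$ and $T(a_n)\to y$ in $\mathcal A$, then for each $c$ the separate continuity of multiplication gives $T(a_n)c=a_nT(c)\to aT(c)=T(a)c$ and also $T(a_n)c\to yc$, so $(y-T(a))\mathcal A=\{0\}$ and $y=T(a)$. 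Hence the graph of $T$ is closed and $T\in B(\mathcal A)$.

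The workhorse for the remaining parts is the derived identity $T(ab)=aT(b)=T(a)b$, which follows by comparing $T(ab)c=(ab)T(c)$ with $aT(b)c=(ab)T(c)$ and applying the without-order property. Closure of $M(\mathcal A)$ under sums and scalars is immediate, and for the product (composition) I would check that $S\circ T\in M(\mathcal A)$: the derived identities yield $(S\circ T)(ab)=S(aT(b))=a\,(S\circ T)(b)$, and commutativity of $\mathcal A$ then gives $(S\circ T)(a)b=b(S\circ T)(a)=(S\circ T)(ba)=(S\circ T)(ab)=a(S\circ T)(b)$, so $S\circ T$ is again a multiplier. The identity operator $I$ plainly lies in $M(\mathcal A)$ and is a two-sided unit, so $M(\mathcal A)$ is a unital subalgebra of $B(\mathcal A)$. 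For commutativity, writing $U=S\circ T$ and $V=T\circ S$, the multiplier relations for $S$ and $T$ produce the crossed identity $U(a)b=S(T(a))b=T(a)S(b)=a\,V(b)$; since $U$ is itself a multiplier we also have $U(a)b=aU(b)$, whence $a\big(U(b)-V(b)\big)=0$ for all $a,b$ and, by without order, $S\circ T=T\circ S$.

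Finally, for completeness I would take a Cauchy net $(T_\alpha)$ in $M(\mathcal A)$ for the strong operator topology. For each fixed $a$ the net $(T_\alpha a)$ is Cauchy in the complete space $\mathcal A$, so I may define $Ta:=\lim_\alpha T_\alpha a$. Passing to the limit in $T_\alpha(a)b=aT_\alpha(b)$, using separate continuity of multiplication, gives $T(a)b=aT(b)$, so $T\in M(\mathcal A)$ and is therefore linear and continuous by the first step; moreover $q_{\ell,a}(T_\alpha-T)=p_\ell(T_\alpha a-Ta)\to 0$, so $T_\alpha\to T$ in the strong operator topology. The delicate points I expect are the automatic continuity of multipliers, which genuinely requires the closed graph theorem in the Fr\'echet rather than the Banach setting, and the verification that the strong-operator limit of a Cauchy net is again a multiplier and hence remains inside $M(\mathcal A)$; the algebraic assertions all reduce to repeated use of the without-order property of Lemma~\ref{withoutorder}.
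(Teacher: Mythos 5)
Your proof is correct and follows essentially the same strategy as the paper: automatic linearity and continuity of multipliers via the without-order property and a closed-graph argument, the algebraic verifications reduced to repeated cancellation, and a net argument for completeness. The one genuine difference is the last step. The paper only proves that $M(\mathcal A)$ is \emph{closed} in $B(\mathcal A)$ under the strong operator topology (taking a net $T_\alpha\to T$ with $T$ already assumed to lie in $B(\mathcal A)$ and checking $T(ab)=aT(b)$), which tacitly presupposes that $B(\mathcal A)$ itself is complete for that topology. You instead start from a Cauchy net in $M(\mathcal A)$, build the pointwise limit $Ta=\lim_\alpha T_\alpha a$ using completeness of $\mathcal A$, verify the multiplier identity by separate continuity of multiplication, and then recover continuity of $T$ from your automatic-continuity step rather than from any property of $B(\mathcal A)$. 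This is a more self-contained route: it sidesteps the question of strong-operator completeness of $B(\mathcal A)$ entirely, at the cost of having to observe (correctly, since $\mathcal A$ is completely metrizable) that Cauchy nets, not just sequences, converge in $\mathcal A$. You also write out the algebraic parts (closure under composition, unitality, commutativity) that the paper dismisses as ``similar to the Banach case''; your derivations there are sound.
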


\begin{proof}
Suppose that $T\in M(\mathcal A)$. Similar to the proof of
\cite[Theorem 1.1.2]{10}, one can show that $T$ is linear. Now, we
show that $T$ is continuous. Let $(b_{\alpha})_{\alpha}$ be a net in $\mathcal A$,
converging to $b\in\mathcal A$ and suppose that $(Tb_{\alpha})_{\alpha}$
converges to $c\in \mathcal A$. We show that $Tb=c$. For all $a\in \mathcal A$ and $\ell\in\mathbb{N}$, we
have
\begin{align*}
p_\ell \big{(}ca-(Tb)a\big{)}&=p_\ell\big{(}ca-(Tb_\alpha)a+(Tb_\alpha)a-(Tb)a\big{)}\\
&\leq p_\ell\big{(}ca-(T b_\alpha)a\big{)}+p_\ell\big{(}b_\alpha (Ta)-b (Ta)\big{)}\\ 
&\leq
p_\ell(c-Tb_\alpha)p_\ell(a)+p_\ell(b_\alpha-b)p_\ell(Ta)\\
&\longrightarrow 0.
\end{align*}
Hence, $p_\ell\big{(}ca-(Tb)a\big{)}=0$ for all $\ell\in\mathbb{N}$. Thus,
\begin{center}
$ca-(Tb)a\in\underset{\ell}{\bigcap}\;\text{ker}\,p_\ell=\lbrace 0\rbrace$.
\end{center}
Consequently, by Lemma \ref{withoutorder},
$c-Tb=0$ and so $Tb=c$
as claimed. Similar to the Banach case, one can prove that
$M(\mathcal A)$ is a unital and commutative subalgebra of
$B(\mathcal A)$. We only show that $M(\mathcal A)$ is closed in
$B(\mathcal A)$, with respect to the strong operative topology.
Let $(T_{\alpha})_{\alpha}$ be a net in $M(\mathcal A)$ such that
$T_{\alpha}\longrightarrow_{\alpha} T$ under the strong operator topology.
 Then,
\begin{center}
$q_{\ell,a}(T_{\alpha}-T)=p_\ell(T_{\alpha}a-Ta)\longrightarrow 0$
\end{center}
for all $a \in \mathcal A$ and $\ell\in \mathbb{N}$. Thus,  
$T_{\alpha}a\longrightarrow_{\alpha} Ta$ in $\mathcal A$. It
follows that $T_{\alpha}(ab)\longrightarrow_{\alpha} T(ab)$ 
for all $a,b\in\mathcal A$.
Also,
$aT_{\alpha}(b)\longrightarrow_{\alpha} aT(b)$ for all $a,b\in\mathcal A$, since the multiplication in $\mathcal A$ is continuous.
 Thus, for each $a,b\in\mathcal A$ we have 
  $T(ab)=aT(b)$ which implies $T\in M(\mathcal A)$.
\end{proof}

We state here a theorem, which leads us
to introduce {\text BSE}-Fr\'echet algebras. 
The proof is similar to the Banach case \cite[Theorem 1.2.2]{10}, and so it is omitted.

\begin{theorem}\label{ThatT}
Let $(\mathcal A,p_\ell)$ be a commutative semisimple 
Fr\'echet algebra and $T\in M(\mathcal A)$. Then, there
exists a unique continuous and bounded function $\widehat{T}$ on
$\Delta(\mathcal A)$ such that
$\widehat{T(a)}=\widehat{T}\;\widehat{a}$ for each $a\in\mathcal A$.
\end{theorem}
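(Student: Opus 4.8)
The plan is to construct $\widehat{T}$ pointwise on $\Delta(\mathcal A)$, verify the factorization $\widehat{T(a)}=\widehat{T}\,\widehat{a}$, and then establish continuity, boundedness, and uniqueness. For the construction, fix $\varphi\in\Delta(\mathcal A)$; since $\varphi\neq 0$ there is $b\in\mathcal A$ with $\varphi(b)\neq 0$, and I set $\widehat{T}(\varphi):=\varphi(T(b))/\varphi(b)$. To see this does not depend on the choice of $b$, I would apply the multiplicative functional $\varphi$ to the defining multiplier identity $T(a)b=aT(b)$, obtaining $\varphi(T(a))\varphi(b)=\varphi(a)\varphi(T(b))$ for all $a,b$; hence whenever $\varphi(a)\neq 0$ and $\varphi(b)\neq 0$ the two ratios agree, so $\widehat{T}(\varphi)$ is well defined. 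The very same identity gives the factorization: if $\varphi(a)\neq 0$ then $\widehat{T(a)}(\varphi)=\varphi(T(a))=\widehat{T}(\varphi)\varphi(a)=\widehat{T}(\varphi)\widehat{a}(\varphi)$, while if $\varphi(a)=0$ then $\varphi(T(a))\varphi(b)=\varphi(a)\varphi(T(b))=0$ with $\varphi(b)\neq 0$ forces $\varphi(T(a))=0$, so both sides vanish. Thus $\widehat{T(a)}=\widehat{T}\,\widehat{a}$ on all of $\Delta(\mathcal A)$.

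For continuity, I would argue locally. For each $b$ the set $U_b:=\{\varphi\in\Delta(\mathcal A):\widehat{b}(\varphi)\neq 0\}$ is open in the Gelfand topology, since $\widehat{b}$ is continuous. On $U_b$ the factorization yields $\widehat{T}=\widehat{T(b)}/\widehat{b}$, a quotient of two continuous functions whose denominator does not vanish, so $\widehat{T}$ is continuous on $U_b$. Because every $\varphi\in\Delta(\mathcal A)$ lies in some $U_b$ and continuity is a local property, $\widehat{T}$ is continuous on $\Delta(\mathcal A)$. Uniqueness is then immediate: if $g\in C_b(\Delta(\mathcal A))$ also satisfies $g\,\widehat{a}=\widehat{T(a)}$ for all $a$, then for each $\varphi$ one chooses $a$ with $\varphi(a)\neq 0$ (possible as $\varphi\neq 0$) to conclude $g(\varphi)=\widehat{T(a)}(\varphi)/\varphi(a)=\widehat{T}(\varphi)$; semisimplicity (Lemma~\ref{withoutorder}) guarantees the ambient consistency of this identification.

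For boundedness I would transfer the classical spectral–radius argument. A short induction on the multiplier identity gives $T^n(a)b^n=a\,(T(b))^n$ for every $n$; normalizing $b$ so that $\varphi(b)=1$ and applying $\varphi$ yields $\widehat{T}(\varphi)^n=\varphi\big((T(b))^n\big)$. Using the bound $|\varphi(c)|\leq p_\ell(c)$ valid for a continuous character, together with submultiplicativity $p_\ell(c^n)\leq p_\ell(c)^n$, one gets $|\widehat{T}(\varphi)|^n\leq p_\ell(T(b))^n$, and taking $n$th roots produces a control on $|\widehat{T}(\varphi)|$ exactly mirroring the estimate $|\widehat{T}(\varphi)|\leq\|T\|$ of the Banach case. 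I expect this last step to be the main obstacle: unlike the Banach setting there is no single operator norm, and continuity of $T$ only furnishes estimates of the form $p_\ell(T\,\cdot)\leq C_\ell\,p_{m(\ell)}(\cdot)$ with indices $m(\ell)$ that may increase, so the pointwise bounds $p_\ell(T(b))$ above depend a priori on $\varphi$. The delicate part is therefore upgrading these pointwise bounds to a uniform one over all of $\Delta(\mathcal A)$, which I would attempt by stratifying $\Delta(\mathcal A)=\bigcup_\ell\Delta_\ell$ with $\Delta_\ell:=\{\varphi:|\varphi(\cdot)|\leq p_\ell(\cdot)\}$, bounding $\widehat{T}$ on each stratum via the submultiplicative seminorms, and checking that these assemble into a global bound.
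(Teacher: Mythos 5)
Your construction of $\widehat{T}$, the check that it is well defined via $\varphi(T(a))\varphi(b)=\varphi(a)\varphi(T(b))$, the factorization $\widehat{T(a)}=\widehat{T}\,\widehat{a}$, the local continuity argument on the open sets $U_b$, and the uniqueness argument are all correct, and they are exactly the route the paper takes: the paper only records the formula $\widehat{T}(\varphi)=\varphi(T(a))/\varphi(a)$ and otherwise defers to Larsen's Banach-algebra proof. Up to that point your write-up is in fact more complete than the paper's.

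The boundedness of $\widehat{T}$ is, however, a genuine gap, and you are right to single it out as the step that does not transfer. The Banach argument needs the single operator norm to make $|\widehat{T}(\varphi)|^{n}|\varphi(a)|=|\varphi(T^{n}(a))|\le \|T\|^{n}\|a\|$ uniform in $\varphi$; in a Fr\'echet algebra continuity of $T$ only yields $p_\ell(T(\cdot))\le C_\ell\,p_{m(\ell)}(\cdot)$ with climbing indices, and your identity $\widehat{T}(\varphi)^{n}=\varphi\big(T(b)^{n}\big)$ with $\varphi(b)=1$ collapses, after taking $n$th roots, to the trivial estimate $|\widehat{T}(\varphi)|\le p_\ell(T(b))$ in which both $b$ and $\ell$ depend on $\varphi$. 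The proposed stratification $\Delta(\mathcal A)=\bigcup_\ell\Delta_\ell$ does bound $\widehat{T}$ on each stratum, but these bounds cannot be assembled into a global one: take $\mathcal A=C(\mathbb{R})$ with $p_\ell(f)=\sup_{|x|\le \ell}|f(x)|$, a commutative semisimple Fr\'echet algebra with $\Delta(\mathcal A)=\mathbb{R}$, and let $T$ be multiplication by $g(x)=x$. Then $T\in M(\mathcal A)$, $\widehat{T}=g$ is continuous but unbounded, and $\Delta_\ell=[-\ell,\ell]$ with $|\widehat{T}|\le\ell$ there. So the boundedness assertion is false in this generality; it is not a defect you could have repaired, but a gap the theorem itself (and the paper's claim that the Banach proof carries over verbatim) inherits, and which would require an extra hypothesis such as equicontinuity of $\Delta(\mathcal A)$ together with an estimate $p_\ell(T(\cdot))\le C\,p_\ell(\cdot)$ for a single $\ell$.
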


Note that in the proof of Theorem \ref{ThatT}, $\widehat{T}$ is defined by
$$
\widehat{T}(\varphi)=\frac{\varphi\big{(}T(a)\big{)}}{\varphi(a)}\;\;\;\;\;\;\;\;\;\;\big{(}\varphi\in\Delta(\mathcal{A})\big{)}
$$
for some $a\in\mathcal{A}$ with $\varphi(a)\not=0$. Clearly, this definition of $\widehat{T}$ is independent of the choice of $a$.

\begin{remark}\label{t5} 
Following \cite{10}, $\widehat{M(\mathcal A)}$ is always unital.
Consider the identity map $I: \mathcal
%Let 
%$$\widehat{M(\mathcal A)}=\{\widehat{T}:\;T\in M(\mathcal A)\}.$$
A\longrightarrow\mathcal A$ and let $\varphi\in\Delta(\mathcal{A})$.
Then, for some $a\in\mathcal{A}$ with $\varphi(a)\not=0$, we have
\begin{center}
$\widehat{I}(\varphi)=\frac{\varphi (I(a))}{\varphi(a)}=
\frac{\varphi(a)}{\varphi(a)}=1$,
\end{center}
where $1$ is
the constant function.
\end{remark}

Now, we are in a position to introduce the concept of {\text BSE}-Fr\'echet
algebra.

\begin{definition}\rm
Let $(\mathcal A,p_\ell)$ be a commutative semisimple Fr\'echet algebra. 
Then, $\mathcal A$ is called a
{\text BSE}-Fr\'echet algebra if
$
C_{\text{BSE}}(\Delta(\mathcal A))=\widehat{M(\mathcal A)}.
$
\end{definition}

Following the definition of a bounded $\Delta$-weak approximate
identity in the sense of Jones-Lahr \cite{J} and also 
\cite{D}, a net $(e_{\lambda})_{\lambda}$ in the Fr\'echet
algebra $(\mathcal A,p_\ell)$ is called a $\Delta$-weak approximate
identity if $\varphi(e_{\lambda})\longrightarrow_{\lambda} 1$ for all
$\varphi\in\Delta(\mathcal A)$, or equivalently
$\varphi(e_\lambda\,a)\longrightarrow\varphi(a)$ for all
$a\in\mathcal A$ and $\varphi\in\Delta(\mathcal A)$. Moreover,
$(e_{\lambda})_{\lambda}$ is called bounded $\Delta$-weak
approximate identity if $(e_{\lambda})_{\lambda}$ is a bounded
subset of $\mathcal A$.

The following result is a generalization of \cite[Corollary 5]{15}, which implies that all
{\text BSE}-Fr\'echet algebras have bounded $\Delta$-weak approximate
identity.

\begin{theorem}\label{t2}
Let $(\mathcal A,p_\ell)$ be a commutative semisimple Fr\'echet algebra. 
Then, $\widehat{M(\mathcal A)}\subseteq
C_{\text{BSE}}(\Delta(\mathcal A))$ if and only if $\mathcal A$ has a
bounded $\Delta$-weak approximate identity.
\end{theorem}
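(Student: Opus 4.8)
The plan is to prove both implications by reducing everything to the net characterization of {\text BSE}-functions recorded in Proposition \ref{t1}(ii), which says that a function $\sigma\in C_b(\Delta(\mathcal A))$ lies in $C_{\text{BSE}}(\Delta(\mathcal A))$ exactly when it is the pointwise limit of $\widehat{a_\lambda}$ for some bounded net $(a_\lambda)_\lambda$ in $\mathcal A$. Both directions then become short arguments that either manufacture or read off such a bounded net.

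For the implication ($\Leftarrow$), I would start from a bounded $\Delta$-weak approximate identity $(e_\lambda)_\lambda$ and fix an arbitrary $T\in M(\mathcal A)$. Consider the net $(T(e_\lambda))_\lambda$. Using the factorization from Theorem \ref{ThatT}, namely $\widehat{T(e_\lambda)}=\widehat{T}\,\widehat{e_\lambda}$, one gets $\widehat{T(e_\lambda)}(\varphi)=\widehat{T}(\varphi)\,\varphi(e_\lambda)$ for every $\varphi\in\Delta(\mathcal A)$; since $\varphi(e_\lambda)\to 1$, this converges pointwise to $\widehat{T}(\varphi)$. Because $\widehat{T}\in C_b(\Delta(\mathcal A))$ by Theorem \ref{ThatT}, Proposition \ref{t1}(ii) will give $\widehat{T}\in C_{\text{BSE}}(\Delta(\mathcal A))$ as soon as I know $(T(e_\lambda))_\lambda$ is bounded. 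This last point is where the only genuine care is needed: $T$ is continuous (as established for elements of $M(\mathcal A)$), and continuous linear maps between locally convex spaces send bounded sets to bounded sets, so the image of the bounded net $(e_\lambda)_\lambda$ is bounded. As $T$ was arbitrary, this yields $\widehat{M(\mathcal A)}\subseteq C_{\text{BSE}}(\Delta(\mathcal A))$.

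For the implication ($\Rightarrow$), I would exploit the fact, recorded in Remark \ref{t5}, that $\widehat{M(\mathcal A)}$ is unital with identity the constant function $1=\widehat{I}$. Under the hypothesis $\widehat{M(\mathcal A)}\subseteq C_{\text{BSE}}(\Delta(\mathcal A))$, this constant function lies in $C_{\text{BSE}}(\Delta(\mathcal A))$. Applying Proposition \ref{t1}(ii) to $\sigma\equiv 1$ produces a bounded net $(a_\lambda)_\lambda$ in $\mathcal A$ with $\widehat{a_\lambda}(\varphi)=\varphi(a_\lambda)\to 1$ for every $\varphi\in\Delta(\mathcal A)$. But that is precisely the definition of a bounded $\Delta$-weak approximate identity, so $(a_\lambda)_\lambda$ is the required net and the proof is complete.

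I expect the substantive difficulty to sit entirely inside Proposition \ref{t1}(ii), which we may assume; within the present argument the only Fr\'echet-specific subtlety is the boundedness of $(T(e_\lambda))_\lambda$, handled by the standard fact that continuity forces a linear map to be bounded on bounded sets. Everything else transfers from the Banach proof of \cite[Corollary 5]{15}, with the supremum norm replaced by the seminorm data $P_M$ and the role of norm-bounded nets played by bounded nets in the Fr\'echet sense.
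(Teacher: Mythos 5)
Your proof is correct, and the converse direction ($\widehat{M(\mathcal A)}\subseteq C_{\text{BSE}}(\Delta(\mathcal A))$ implies the existence of a bounded $\Delta$-weak approximate identity, via $\widehat{I}=1$ and Proposition \ref{t1}(ii)) coincides with the paper's argument. The forward direction, however, takes a genuinely different route. The paper verifies the BSE inequality for $\widehat{T}$ directly: it splits $\vert\sum_i c_i\widehat{T}(\varphi_i)\vert$ into an $\varepsilon$-small term coming from $\widehat{T}(\varphi_i)(1-\widehat{e_\lambda}(\varphi_i))$ plus the term $\vert\sum_i c_i\widehat{T(e_\lambda)}(\varphi_i)\vert$, and then invokes Proposition \ref{t1}(i) to bound the latter by $\beta_M P_M(\sum_i c_i\varphi_i)$. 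You instead observe that $\widehat{T(e_\lambda)}(\varphi)=\widehat{T}(\varphi)\varphi(e_\lambda)\to\widehat{T}(\varphi)$ pointwise, that $(T(e_\lambda))_\lambda$ is bounded because a continuous linear map on a locally convex space carries bounded sets to bounded sets (and the paper's Proposition on $M(\mathcal A)$ does establish continuity of multipliers), and then appeals to the characterization in Proposition \ref{t1}(ii). Your version is cleaner: it removes the $\varepsilon$-bookkeeping and, importantly, sidesteps the uniformity question in the paper's estimate, where the set $M$ and constant $\beta_M$ produced for $\widehat{T(e_\lambda)}$ a priori depend on $\lambda$, which in turn was chosen after fixing the data $c_1,\dots,c_n,\varphi_1,\dots,\varphi_n$. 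The price is that your argument leans entirely on Proposition \ref{t1}(ii), which is the more delicate result; the paper's route only needs part (i) of that proposition for this direction. Both are legitimate given that Proposition \ref{t1} is available.
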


\begin{proof}
First, suppose that $\mathcal A$ has a bounded $\Delta$-weak
approximate identity $(e_\lambda)_{\lambda}$ and $T\in M(\mathcal
A)$. For $c_1,\cdots,c_n \in\mathbb{C}$, 
$\varphi_{1},\cdots,\varphi_{n}\in\Delta(\mathcal A)$ and each $\varepsilon> 0$, there
exists $\lambda_0$ such that for all $\lambda\geq\lambda_0$
\begin{align*}
\vert\sum_{i=1}^{n} c_i\widehat{T}(\varphi_i)\vert&\leq\vert
\sum_{i=1}^{n}c_i( \widehat{T}(\varphi_i)-\widehat{T}
(\varphi_i)\widehat{e_{\lambda}}(\varphi_i))\vert+\vert
\sum_{i=1}^{n}c_i\widehat{T}(\varphi_i )
\widehat{e_{\lambda}}(\varphi_i)\vert\\
&=\vert\sum_{i=1}^{n}c_i\widehat{T}(\varphi_i)(1-
\widehat{e_{\lambda}}(\varphi_i))\vert+\vert\sum_{i=1}^{n}
c_i\widehat{T (e_{\lambda})}(\varphi_i)\vert\\
&\leq\varepsilon +\vert\sum_{i=1}^{n}
c_i\widehat{T (e_{\lambda})}(\varphi_i)\vert.
\end{align*}
According to part (i) of the Proposition \ref{t1}, 
$\widehat{T(e_{\lambda})}$ is a {\text BSE}-function. 
Then, there exist a bounded set $M$ in $\mathcal A$ and positive
 real number $\beta_{M}$ where $$
 \vert\sum_{i=1}^{n}
c_i\widehat{T (e_{\lambda})}(\varphi_i)\vert \leq \beta_{M}P_{M}
(\sum_{i=1}^{n}c_i\varphi_i).$$
Since $\varepsilon$ is arbitrary, we have
$$
\vert \sum_{i=1}^{n}c_i\widehat{T}(\varphi_i)\vert \leq
\beta_{M}P_{M}(\sum_{i=1}^{n}c_i\varphi_i).
$$
Hence, $\widehat{T}\in C_{\text{BSE}}(\Delta(\mathcal A))$.

Conversely, suppose that $\widehat{M(\mathcal A)}\subseteq
C_{\text{BSE}}(\Delta(\mathcal A))$. 
By applying 
part (ii) of the Proposition \ref{t1}, for
the constant function $1\in C_{\text{BSE}}(\Delta(\mathcal A))$, there exists a
bounded net $(a_\lambda)_{\lambda}$ in $\mathcal A$ such that
$\varphi(a_\lambda)\longrightarrow_\lambda 1$ ($\varphi\in\Delta(\mathcal
A))$. It follows that $(a_\lambda)_{\lambda}$ is a bounded
$\Delta$-weak approximate identity.
\end{proof}

Before proving the last result of the paper,
let $\sigma\in C_{\text{BSE}}(\Delta(\mathcal A))$ and
$$P_{M}(\sigma)=\inf\big{\lbrace}\beta_{M}:
\vert\sum_{i=1}^{n}c_{i}\sigma(\varphi_i)\vert\leq\beta_{M}
P_{M}(\sum_{i=1}^{n} c_{i} \varphi_i ),\; c_i
 \in \mathbb{C},\; \varphi_i \in
\Delta({\mathcal A}) \big{\rbrace}.$$
Then, $\big{(}C_{\text{BSE}}(\Delta(\mathcal A)),P_{M}\big{)}$ is a locally convex algebra. In addition, 
we define 
$$\beta_{M}({\mathcal A})=\sup\big{\lbrace}\vert\sum_{i=1}^{n}c_{i}\vert:
P_{M}(\sum_{i=1}^{n} c_{i} \varphi_i ) \leq 1,\;c_1,\cdots, c_n
 \in \mathbb{C},\;\varphi_1 ,\cdots , \varphi_n \in
\Delta({\mathcal A})\big{\rbrace},$$ 
where $M$ is a bounded set in
 $\mathcal A$. It is easy to prove that
 $$
P_{M}(\sigma)=\sup\big{\lbrace}\vert\sum_{i=1}^{n}c_{i}\sigma(\varphi_i)\vert:
 P_{M}(\sum_{i=1}^{n} c_{i} \varphi_i )\leq 1,\; c_i
 \in \mathbb{C},\; \varphi_i \in
\Delta({\mathcal A})\big{\rbrace}.
 $$
We note that if $C_{\text{BSE}}(\Delta(\mathcal A))$ is unital, then 
$\beta_{M}(\mathcal A)=P_{M}(1)$ where $1$ is
the constant function.

\begin{theorem}
Let $(\mathcal A,p_\ell)$ be a commutative semisimple Fr\'echet algebra.
 Then, the following statements are
equivalent.
\begin{enumerate}
\item[(i)] $C_{\text{BSE}}(\Delta(\mathcal A))$ is unital; \item[(ii)]
$\mathcal A$ has a bounded $\Delta$-weak approximate identity;
\item[(iii)] $\beta_{M}(\mathcal A)<\infty$, for some bounded set $M$ in $\mathcal A$.
\end{enumerate}
\end{theorem}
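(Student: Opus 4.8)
The plan is to reduce all three conditions to the single statement that the constant function $1$ lies in $C_{\text{BSE}}(\Delta(\mathcal A))$, and then read off the equivalences from that reformulation.

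First I would pin down what the unit of $C_{\text{BSE}}(\Delta(\mathcal A))$ must be. Suppose $u$ is a unit; since the product on $C_{\text{BSE}}(\Delta(\mathcal A))$ is pointwise, $u(\varphi)\sigma(\varphi)=\sigma(\varphi)$ for every $\sigma$ and every $\varphi\in\Delta(\mathcal A)$. Fixing $\varphi$, semisimplicity yields $a\in\mathcal A$ with $\varphi(a)\neq 0$, and by Proposition \ref{t1}(i) the function $\widehat a$ lies in $C_{\text{BSE}}(\Delta(\mathcal A))$ with $\widehat a(\varphi)=\varphi(a)\neq 0$; hence $u(\varphi)=1$. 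Thus a unit, when it exists, is forced to be the constant function $1$, and conversely $1$ acts as a unit whenever it belongs to the algebra. Therefore (i) is equivalent to the assertion $1\in C_{\text{BSE}}(\Delta(\mathcal A))$.

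Next, for the equivalence of (i) and (ii) I would apply Proposition \ref{t1}(ii) to $\sigma=1$: the membership $1\in C_{\text{BSE}}(\Delta(\mathcal A))$ holds exactly when there is a bounded net $(a_\lambda)_\lambda$ in $\mathcal A$ with $\widehat{a_\lambda}(\varphi)\to 1$ for all $\varphi\in\Delta(\mathcal A)$, which is precisely the definition of a bounded $\Delta$-weak approximate identity. For the equivalence of (i) and (iii) I would unwind the definition of $\beta_M(\mathcal A)$. Since $1(\varphi_i)=1$ for every $\varphi_i$, the defining BSE-inequality for the constant function reads $|\sum_{i=1}^n c_i|\leq\beta_M\,P_M(\sum_{i=1}^n c_i\varphi_i)$, and by scaling this holds for some bounded $M$ and constant $\beta_M$ if and only if $\beta_M(\mathcal A)=\sup\{|\sum c_i|:P_M(\sum c_i\varphi_i)\leq 1\}<\infty$. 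This gives $1\in C_{\text{BSE}}(\Delta(\mathcal A))$ iff (iii), consistent with the relation $\beta_M(\mathcal A)=P_M(1)$ noted just before the theorem.

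The whole argument is a matter of identifying the unit and then tracing definitions. The only genuine step is the first paragraph, where semisimplicity together with Proposition \ref{t1}(i) forces the unit to be the constant function $1$; I expect no serious obstacle beyond keeping the three reformulations of ``$1\in C_{\text{BSE}}(\Delta(\mathcal A))$'' aligned, and in particular handling the degenerate case $P_M(\sum c_i\varphi_i)=0$ through the scaling argument when verifying the direction (iii) $\Rightarrow$ (i).
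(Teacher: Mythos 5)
Your argument is correct and follows the same overall strategy as the paper: reduce everything to the single assertion that the constant function $1$ belongs to $C_{\text{BSE}}(\Delta(\mathcal A))$, obtain (i) $\Leftrightarrow$ (ii) from Proposition \ref{t1}(ii), and obtain (i) $\Leftrightarrow$ (iii) by unwinding the definition of $\beta_M(\mathcal A)$ with a scaling argument. The one place where you genuinely diverge is the direction (ii) $\Rightarrow$ (i): the paper routes this through Theorem \ref{t2} (a bounded $\Delta$-weak approximate identity gives $\widehat{M(\mathcal A)}\subseteq C_{\text{BSE}}(\Delta(\mathcal A))$) together with Remark \ref{t5} (the identity multiplier has Gelfand transform equal to the constant function $1$), whereas you apply the converse half of the characterization in Proposition \ref{t1}(ii) directly to $\sigma=1$. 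Your route is shorter and avoids invoking the multiplier algebra at all; the paper's route has the advantage of exhibiting $1$ explicitly as $\widehat{I}$ and of reusing machinery already established for the multiplier inclusion. Two further points in your favour: you explicitly verify that a unit of $C_{\text{BSE}}(\Delta(\mathcal A))$ must be the constant function $1$ (the paper tacitly assumes this when it writes $\sum c_i=\sum c_i\,1(\varphi_i)$), though note that the existence of $a$ with $\varphi(a)\neq 0$ needs only that $\varphi$ is a nonzero character, not semisimplicity; and you flag the degenerate case $P_M\left(\sum_{i=1}^n c_i\varphi_i\right)=0$ in the direction (iii) $\Rightarrow$ (i), which the paper's division by $d$ silently ignores and which does require the separate observation that $\beta_M(\mathcal A)<\infty$ forces $\sum_{i=1}^n c_i=0$ there.
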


\begin{proof}
(i) $\Leftrightarrow$ (ii). Suppose that $C_{\text{BSE}}(\Delta (\mathcal
A)) $ is unital. By using part (ii) of the Proposition \ref{t1}, there exists a bounded net
$(a_{\lambda})_{\lambda}\subseteq \mathcal A$ which 
$\varphi(a_\lambda)=\widehat{a_\lambda}(\varphi)\longrightarrow_{\lambda}
1$ for each $\varphi \in \Delta(\mathcal A)$. This is equivalent
to saying that
the net $(a_\lambda)_\lambda$ is a bounded 
$\Delta$-weak approximate identity for $\mathcal A$. Hence, $(ii)$ 
is obtained. 

Conversely, let
 $\mathcal A$ has a bounded $\Delta$-weak approximate identity.
 Then, 
$\widehat{M(\mathcal A)} \subseteq C_{\text{BSE}}(\Delta(\mathcal A))$, by 
 Theorem \ref{t2}. Furthermore, the identity map $I: \mathcal A\rightarrow
\mathcal A$ belongs to $M(\mathcal A)$ and $\widehat{I}$ is the constant 
function $1$ on $\Delta(\mathcal A)$; see Remark \ref{t5}. Therefore, $1\in
C_{\text{BSE}}(\Delta(\mathcal A))$ and so 
$C_{\text{BSE}}(\Delta(\mathcal A))$ is unital.

(i) $\Leftrightarrow$ (iii). Let $C_{\text{BSE}}(\Delta(\mathcal A))$ be
unital. Then, there exist a bounded set $M$ in $\mathcal A$ and 
 a positive real number $\beta_{M}$ such that for every finite number
 of complex-numbers $c_1,\cdots, c_n$ and the same number of 
 $\varphi_1,\cdots, \varphi_n$ in $\Delta(\mathcal A)$ we have
$$
|\sum_{i=1}^{n}c_i|=\vert\sum_{i=1}^{n}c_i 1(\varphi_i)\vert\leq \beta_{M}
P_{M}(\sum_{i=1}^{n}c_i\varphi_i).
$$
Consequently, 
$\beta_{M}(\mathcal A)\leq \beta_{M}<\infty.$ 
Hence, (iii) is obtained. 

Conversely, suppose that (iii) holds and
$d:=P_{M}(\sum_{i=1}^{n} c_{i} \varphi_i )$
for $c_1,\cdots, c_n\in\mathbb{C}$ and $\varphi_1,\cdots, \varphi_n\in\Delta(\mathcal A)$.
Thus, 
$$P_{M}(\sum_{i=1}^{n} (\frac{c_{i}}{d})\varphi_i )=(\frac{1}{d})P_{M}(\sum_{i=1}^{n} c_{i} \varphi_i )=1,$$
and hence,
$$  
\vert\sum_{i=1}^{n}c_{i}1(\varphi_i)\vert\leq
\beta_{M}(\mathcal A)P_{M}(\sum_{i=1}^{n} c_{i} \varphi_i ).
$$
It follows that 
$C_{\text{BSE}}(\Delta(\mathcal A))$ is unital.
\end{proof}

{\bf Acknowledgment.} The authors would like to thank the University of
Isfahan for their support and to Dr. F. Abtahi for her useful suggestions and comments.

\footnotesize

\begin{thebibliography}{9}

\bibitem{1A} F. Abtahi and Z. Kamali, {\it The Bochner-Schoenberg-Eberlein 
property for vector-valued $\ell^{p}$-Spaces}, Mediterr. J. Math., {\bf 17}, (2020), 94.

\bibitem{2A} F. Abtahi, Z. Kamali and M. Toutounchi, {\it The Bochner-Schoenberg-Eberlein 
property for vector-valued Lipschitz algebras}, J. Math. Anal. Appl., {\bf 479}, (2019), 1172-1181.   

\bibitem{3A} Z. Alimohammadi and A. Rejali, {\it Fr\'echet algebras in abstract harmonic
 analysis}, arXiv:1811.10987v1 [math.FA]. 

\bibitem{1} S. Bochner, {\it A theorem on Fourier-Stieltjes integrals}, Bull. Amer. 
Math. Soc., {\bf 40/4}, (1934), 271-276.

\bibitem{D} R. S. Doran and J. Wichmann, Approximate identities and 
factorization in Banach modules, Lecture Notes in Math., {\bf
768}, Springer-Verlag, Berlin, 1979.

\bibitem{2} W. F. Eberlein, {\it Characterizations of Fourier- Stieltjes 
transforms}, Duke Math J., {\bf 22}, (1955), 465-468.

\bibitem{2-1} M. Fragoulopoulou, {\it Uniqueness of topology for semisimple LFQ-algebras},
Proc. Amer. Math. Soc. {\bf 117}, (1993), 963-969.

\bibitem{3} H. Goldmann, Uniform Fr\'echet algebras, North-Holland 
Mathematics Studies, {\bf 162}, North-Holand, Amesterdam-New 
York, 1990.

\bibitem{3-1} S. L. Gulick. {\it The bidual of a locally multiplicatively-convex 
algebra}, Pacific Journal of Mathematics., {\bf 17/1}, 1966, 
71-96.

\bibitem{4} J. Inoue and S. -E. Takahasi, {\it Constructions of bounded weak 
approximate identities for segal algebras on LCA groups}, Acta 
Sci. Math. (Szeged), {\bf 66}, (2000), 257-271.

\bibitem{5} J. Inoue  and S. E. Takahasi, {\it On characterizations of the image 
of Gelfand transform of commutative Banach algebras}, 
Math. Nachr. {\bf 280}, (2007), 105-126.

\bibitem{6} K. Izuchi, {\it The Bochner-Schoenberg-Eberlein theorem and spaces of 
analytic functions on the open unit disc}, Math. Japon., {\bf 37}, 
(1992), 65-77.

\bibitem{J} C. A. Jones and C. D. Lahr, {\it Weak and norm approximate 
identities are different}, Pacific J. Math., {\bf 72}, (1977), 
99-104.

%\bibitem{7} Z. Kamali, M. Lashkarizadeh Bami, {\it The multiplier algebra
 %and bse property of the direct sum of banach algebras}, Bull. Aust. Math.
 %Soc. {\bf 88}, (2013), 250-258. 
%
\bibitem{8} E. Kaniuth, A. T. Lau and A. \"Ulger, {\it Homomorphisms of commutative 
Banach algebras and extensions to multiplier algebras with  
applications to Fourier algebras}, Studia Math., {\bf 183}, 
(2007), 35-62.

\bibitem{9} E. Kaniuth and A. \"Ulger, {\it The Bochner-Schoenberg-Eberlein property   
for commutative Banach algebras, especially Fourier-Stieltjes   
algebras}, Trans. Amer. Math. Soc., {\bf 362}, (2010), 4331-4356.

\bibitem{9-1} G. kothe, Topological  vector spaces. (II), Springer-Verlag.  
New York, 1979.

\bibitem{10} R. Larsen, An Introduction to the Theory of Multipliers. Springer, 
New York, 1971.

\bibitem{11} R. Meise and D. Vogt, Introduction to Functional Analysis, 
Oxford Science Publications, 1997.

\bibitem{12} W. Rudin, Fourier Analysis on Groups, Wiley Interscience, New York, 1984.

\bibitem{13} I. J. Schoenberg, {\it A remark on the preceding note by Bochner}, Bull. 
Amer. Math. Soc., {\bf 40/4}, (1934), 277-278.

%\bibitem{14} S. -E. Takahasi, Y. Takahashi, O. Hatori and K. 
%Tanahashi, {\it Commutative Banach algebras and BSE-norm}, Math. 
%Japonica., {\bf 46}, (1997), 273-277.
%
\bibitem{15} S. E. Takahasi and O. Hatori, {\it Commutative Banach algebras which 
satisfy a Bochner- Schoenberg- Eberlein-type theorem}, Proc. Amer. 
Math. Soc., {\bf 110}, (1990), 149-158.

\bibitem{16} S. E. Takahasi and O. Hatori, {\it Commutative Banach algebras and 
BSE-inequalities}, Math. Japonica., {\bf 37}, (1992), 47-52. 
MR1176031 (93h:46069).

%\bibitem{17} A. \"Ulger, {\it Multipliers with closed range on commutative Banach 
%algebras}, Studia Math., {\bf 153}, (2002), 59-80.
%
%\bibitem{18} D. Vogt, Lectures on Fr\'echet spaces, Bergische Universitat
 %Wuppertal Sommersemester, 2000.
%
\end {thebibliography}

\vspace{9mm}

{\footnotesize \noindent

\noindent
 M. Amiri\\
Department of Mathematics,
   University of Isfahan,
    Isfahan, Iran\\
     mitra.amiri@sci.ui.ac.ir\\

\noindent
 A. Rejali\\
Department of Mathematics,
   University of Isfahan,
    Isfahan, Iran\\
    rejali@sci.ui.ac.ir\\

\end{document}